\def\R{\mathbb{R}}
\def\Z{\mathbb{Z}}
\def\C{\mathbb{C}}
\def\d{\mathrm{d}}
\def\div{\mathrm{div}_\omega}
\def\hvf{\mathrm{VF}_\mathrm{hol}}
\def\avf{\mathrm{VF}_\mathrm{alg}}
\def\hvfw{\mathrm{VF}^\omega_\mathrm{hol}}
\def\avfw{\mathrm{VF}^\omega_\mathrm{alg}}
\def\chvf{\mathrm{CVF}_\mathrm{hol}}
\def\cavf{\mathrm{CVF}_\mathrm{alg}}
\def\chvfw{\mathrm{CVF}^\omega_\mathrm{hol}}
\def\cavfw{\mathrm{CVF}^\omega_\mathrm{alg}}
\def\lie{\mathrm{Lie}}
\def\F{\mathfrak{F}}
\def\G{\mathfrak{G}}
\def\L{\mathfrak{L}}
\def\O{\mathcal{O}}
\def\T{\mathrm{T}}
\def\H{\mathrm{H}}
\def\sC{\mathcal{C}}
\def\Z{\mathcal{Z}}
\def\B{\mathcal{B}}
\def\aut{\mathrm{Aut}_\mathrm{hol}}
\def\autw{\mathrm{Aut}_\mathrm{hol}^\omega}
\def\algiso{\cong_\mathrm{alg}}
\def\holiso{\cong_\mathrm{hol}}
\newtheorem{theorem}{Theorem}[section]
\newtheorem*{theorem*}{Theorem}
\newtheorem{corollary}[theorem]{Corollary}
\newtheorem*{corollary*}{Corollary}
\newtheorem*{maintheorem*}{Main Theorem}
\newtheorem{proposition}[theorem]{Proposition}
\newtheorem{lemma}[theorem]{Lemma}
\newtheorem*{lemma*}{Lemma}
\theoremstyle{definition}
\theoremstyle{definition}
\newtheorem{remark}[theorem]{Remark}
\theoremstyle{definition}
\newtheorem{definition}[theorem]{Definition}
\title[(Volume) Density Property of some complex manifolds]{(Volume) Density Property of a family of complex manifolds including the Koras-Russell Cubic}
\author{Matthias Leuenberger}
\address{Mathematisches Institut, Universit\"at Bern, Sidlerstrasse 5,
  CH-3012 Bern, Switzerland.}%
\email{matthias.leuenberger@bluewin.ch}
\date{\today}
\thanks{{\it 2000 Mathematics Subject
    Classification}:  32M05; 32M25; 14R10 .\\
  \mbox{\hspace{11pt}}{\it Key words}: Density property, holomorphic automorphisms,
  Koras-Russel cubic threefold.\\
}
\begin{document}
 
\begin{abstract}
We present modified versions of existing criteria for the density property and the volume density property of complex manifolds. We apply this methods to
show the (volume) density property for a family of manifolds given by $x^2y=a(\bar z) + xb(\bar z)$ with $\bar z =(z_0,\ldots,z_n)\in\C^{n+1}$ and volume form
$\d x/x^2\wedge \d z_0\wedge\ldots\wedge\d z_n$. The key step is showing that in certain cases transitivity of the action of (volume preserving) holomorphic
automorphisms implies the (volume) density property, and then giving sufficient conditions for the transitivity of this action. In
particular, we show that the Koras-Russell Cubic Threefold $\lbrace x^2y + x + z_0^2 + z_1^3=0\rbrace$ has the density property and the volume density property.
\end{abstract}
\maketitle
\section{Introduction}
The density property and the volume density property is a property of Stein manifolds with a huge amount of applications in complex geometry in several
variables. It was introduced by Varolin in \cite{varo-dens}. The fact that $\C^n$ has the density property for $n\geq2$ was already used by Anders\'en and
Lempert in \cite{AL} where they showed that holomorphic automorphisms can be approximated by some special family of automorphisms (called shear automorphisms).
Varolin realized that the main observation of Anders\'en and Lempert may be formalized and can be applied to more general complex manifolds and different
problems. Rosay and Forstneri\v{c} contributed also a lot to this progress in \cite{FR}. This area of complex analysis in several variables is nowadays called
Anders\'en-Lempert theory. The numerous applications of the density property are due to the Main Theorem of Anders\'en-Lempert theory which states that on
manifolds with density property any local phase flow on a Runge domain can be approximated uniformly on compacts by global automorphisms. The analogous
statement holds in the volume preserving case. For a deeper view into this topic we refer to the comprehensive texts \cite{kaku-state,kut,fo}.

\begin{definition}
 Let $X$ be a Stein manifold. If the Lie algebra $\lie(\chvf(X))$ generated by complete (= globally integrable) holomorphic vector fields $\chvf(X)$ on $X$ is
dense (in compact-open topology) in the Lie algebra of all holomorphic vector fields $\hvf(X)$ on $X$ then $X$ has the density property.

Let $X$ be a Stein manifold equipped with a holomorphic volume form $\omega$. If the Lie algebra $\lie(\chvfw(X))$ generated by complete volume
preserving (= vanishing $\omega$-divergence)  holomorphic vector fields $\chvfw(X)$ on $X$ is
dense in the Lie algebra of all volume preserving holomorphic vector fields $\hvfw(X)$ on $X$ then $X$ has the volume density property.
\end{definition}
Recall that a vector field $\nu$ is called volume preserving if the Lie derivative $L_\nu \omega$ vanishes where the Lie derivative is given by the formula
$L_\nu = \d i_\nu + i_\nu \d$ and $i_\nu$ is the interior product of a form with $\nu$.

In Section \ref{sec-crit} we present a criterion that is showing (volume) density property.
For the
definition of (semi-)compatible pairs and ($\omega$-)generating sets see Definitions \ref{def-scomp}, \ref{def-comp} and \ref{def-generate}. For a
vector field $\nu$ and a point $p\in X$ we denote by $\nu [p] \in\T_pX$ the tangent vector of $\nu$ at $p$.

\begin{theorem}\label{criterion}
 (1) Let $X$ be a Stein manifold such that the holomorphic automorphisms $\aut(X)$ act transitively on $X$. If there are compatible pairs $(\nu_i,\mu_i)$ such
that there
is a point $p\in X$ where the vectors $\mu_i[p]$ form a generating set of $\mathrm{T}_pX$ then $X$ has the density property.

(2) Let $X$ be a Stein manifold with a holomorphic volume form $\omega$ such that the volume preserving holomorphic automorphisms $\autw(X)$ act transitively
on $X$ and
$\mathrm{H}^{n-1}(X,\C)=0$ (where $n=\dim X$). If there are semi-compatible pairs $(\nu_i,\mu_i)$ of volume preserving vector fields such that there is a point
$p\in X$ where the
vectors $\nu_i[p]\wedge\mu_i[p]$ form a generating set of $\mathrm{T}_pX\wedge\mathrm{T}_pX$ then $X$ has the volume density property.
\end{theorem}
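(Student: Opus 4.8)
The plan is to reduce both statements to the original (semi-)compatible-pair criteria of Kaliman and Kutzschebauch, in which the pairs must form a generating set at \emph{every} point of $X$; the transitivity hypothesis is exactly what promotes the generating condition at the single point $p$ to one holding everywhere. First I would note that for $\phi\in\aut(X)$ the push-forward $(\phi_*\nu_i,\phi_*\mu_i)$ is again a compatible pair --- a biholomorphism preserves completeness, sends $\ker\nu_i$ isomorphically onto $\ker(\phi_*\nu_i)$, and intertwines Lie brackets, so every clause of Definitions~\ref{def-scomp}, \ref{def-comp} and \ref{def-generate} is preserved --- while $(\phi_*\mu_i)[\phi(p)]=\d\phi_p(\mu_i[p])$. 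Hence for any $q\in X$, picking $\phi$ with $\phi(p)=q$ produces compatible pairs whose $\mu$-components form a generating set of $\T_qX$. The family of all these automorphism images is then a family of compatible pairs generating $\T X$ at every point; since $\T X$ is coherent on the Stein manifold $X$, the Forster--Ramspott theorem (finite generation of globally generated coherent sheaves on Stein spaces) lets one extract finitely many of them, $(\tilde\nu_1,\tilde\mu_1),\dots,(\tilde\nu_M,\tilde\mu_M)$, still generating $\T_xX$ for every $x\in X$. This is precisely the hypothesis of the classical density criterion.

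I would then recall why that hypothesis yields the density property. Write $\overline{\L}:=\overline{\lie(\chvf(X))}$, the closure of the Lie algebra generated by the complete holomorphic vector fields. The algebraic core is the reparametrization lemma: if $\nu$ is complete and $h\in\ker\nu$ (more generally $h\in\ker\nu^2$), then $h\nu$ is complete, because along an orbit of $\nu$ the function $h$ is affine in the flow parameter, so the flow of $h\nu$ solves a linear ordinary differential equation and is entire. Feeding products of such functions into Lie brackets, and using that a $\C$-flow conjugate of a complete field is complete and that a limit of complete fields (for instance a parameter derivative of a holomorphic one-parameter family of complete fields) lies in $\overline{\L}$, one shows --- and this is the precise content of Definitions~\ref{def-scomp} and \ref{def-comp} --- that each compatible pair $(\tilde\nu_j,\tilde\mu_j)$ contributes the whole module $\O(X)\cdot\tilde\mu_j$ to $\overline{\L}$. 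Since the $\tilde\mu_j$ generate $\T X$ at every point, Cartan's Theorem~B writes any $\tau\in\hvf(X)$ as a finite sum $\tau=\sum_j h_j\tilde\mu_j$ with $h_j\in\O(X)$; hence $\tau\in\overline{\L}$ and $X$ has the density property.

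For part~(2) the same two steps run inside the category of volume-preserving vector fields, with $\autw(X)$ replacing $\aut(X)$: a volume-preserving biholomorphism preserves vanishing of the $\omega$-divergence and preserves semi-compatibility, so transitivity of $\autw(X)$ again spreads the generating property from $p$ over all of $X$, with the same finiteness extraction. The new ingredient is the isomorphism $\theta\mapsto i_\theta\omega$ between volume-preserving fields and closed holomorphic $(n-1)$-forms (closedness is equivalent to $L_\theta\omega=0$ since $\d\omega=0$ automatically). Under it a bracket $[\theta_1,\theta_2]$ corresponds to $\d(i_{\theta_1}i_{\theta_2}\omega)$, and $i_{\nu_i}i_{\mu_i}\omega$ at $p$ is the contraction of $\omega$ against the bivector $\nu_i[p]\wedge\mu_i[p]$; so the hypothesis that the $\nu_i[p]\wedge\mu_i[p]$ form a generating set of $\T_pX\wedge\T_pX$ means exactly that the holomorphic $(n-2)$-forms $i_{\nu_i}i_{\mu_i}\omega$ generate $\Omega^{n-2}$ at $p$, hence (after the reduction) at every point. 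The semi-compatible machinery of Definition~\ref{def-scomp} shows that $\overline{\lie(\chvfw(X))}$ contains the volume-preserving field determined by $i_\theta\omega=\d(h\cdot i_{\tilde\nu_j}i_{\tilde\mu_j}\omega)$ for $h$ in a large submodule of $\O(X)$. Now let $\tau$ be volume-preserving: then $i_\tau\omega$ is a closed holomorphic $(n-1)$-form, so by $\H^{n-1}(X,\C)=0$ (and the fact that on the Stein manifold $X$ the complex cohomology is computed by the holomorphic de Rham complex) it is exact, $i_\tau\omega=\d\gamma$ with $\gamma$ a holomorphic $(n-2)$-form; expanding $\gamma=\sum_j h_j\, i_{\tilde\nu_j}i_{\tilde\mu_j}\omega$ via Cartan's Theorem~B yields $i_\tau\omega=\sum_j\d(h_j\, i_{\tilde\nu_j}i_{\tilde\mu_j}\omega)$, whence $\tau\in\overline{\lie(\chvfw(X))}$ and $X$ has the volume density property.

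The step I expect to be the real obstacle is the volume-preserving algebraic core behind Definition~\ref{def-scomp}: carrying the reparametrization-and-bracket computation through while keeping every field divergence-free --- since $\div(h\nu)=h\,\div\nu+\nu(h)$, one may rescale a divergence-free $\nu$ only by $h\in\ker\nu$, and one must verify that the correction terms in $[f\nu,g\mu]=f\nu(g)\mu-g\mu(f)\nu+fg[\nu,\mu]$ organize into divergence-free combinations --- together with the cohomological bookkeeping that turns $\H^{n-1}(X,\C)=0$ into a genuine global holomorphic primitive $\gamma$ admitting the required representation. A secondary point requiring care is the descent from ``generated everywhere by the infinite family of automorphism images'' to ``generated by finitely many (semi-)compatible pairs'', that is, the use of coherence on $X$ and the check that the extracted finite family still consists of honest pairs.
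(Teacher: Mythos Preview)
Your overall architecture---push the generating-set condition around by automorphisms, extract finitely many pairs, then invoke Cartan's Theorem~B---is close in spirit to the paper's argument, but there is a genuine gap in part~(1). You assert that ``each compatible pair $(\tilde\nu_j,\tilde\mu_j)$ contributes the whole module $\O(X)\cdot\tilde\mu_j$ to $\overline{\L}$'' and call this ``the precise content of Definitions~\ref{def-scomp} and~\ref{def-comp}''. It is not. The computation $[f\nu,gh\mu]-[fh\nu,g\mu]=fg\,\nu(h)\,\mu$ (Lemma~\ref{lem-comp}) only shows that $I\cdot\nu(h)\cdot\mu\subset\overline{\L}$, where $I$ is the semi-compatibility ideal and $h$ is the compatibility function; neither $I=\O(X)$ nor invertibility of $\nu(h)$ is assumed. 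So you cannot expand an arbitrary vector field as $\sum h_j\tilde\mu_j$ with the $h_j$ forced to lie in $I_j\cdot\nu_j(h_j)$, and your Cartan~B step breaks down. The same issue recurs in part~(2): the $h_j$ in your expansion $\gamma=\sum h_j\,i_{\tilde\nu_j}i_{\tilde\mu_j}\omega$ are only in $\O(X)$, not in the ideals~$I_j$ that semi-compatibility actually delivers (Lemma~\ref{lem-scomp}).

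The paper's fix is to replace $\mu_i$ by $f_i\mu_i$ for some fixed nonzero $f_i\in I_i\cdot\nu_i(h_i)$; then the full $\O(X)$-module $\O(X)\cdot(f_i\mu_i)$ \emph{is} in $\overline{\L}$, and the generating-set condition still holds at a generic point since the $f_i$ are nonzero on a dense open set. But this introduces exactly the difficulty your Forster--Ramspott step was meant to avoid: the $f_i\mu_i$ (and their automorphism images) vanish along hypersurfaces, so they do not globally generate $\T X$, and a finite subfamily certainly will not. The paper sidesteps this by working compact set by compact set: for each holomorphically convex $K$ one chooses a Runge Stein neighbourhood $Y$ with compact closure, uses transitivity to add finitely many automorphism pull-backs so that the vanishing locus misses $Y$, and then approximates on $K$ via Lemma~\ref{lem-pre3}. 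This local-to-approximate argument (Proposition~\ref{prop-crit}) is the substantive content you are missing; once you have it, the proof of the theorem is a two-line reduction.
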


Note that this criterion  and its proof is very much inspired by the criteria in \cite{kaku-density,kaku-volume2} for the algebraic (volume) density property 
(see Definition \ref{def-adp}).
Actually, e.g.
for (1), the only
difference is that instead of requiring the algebraic automorphisms to act transitively on $X$ we require the holomorphic automorphisms to act 
transitively. 

In Section \ref{sec-trans} we investigate the transitivity of the action by (volume preserving) holomorphic automorphisms $\aut(X)$ (resp. $\autw(X)$) where
$X$ is given by $x^2y=a(\bar z) + xb(\bar z)$ with $\bar z =(z_0,\ldots,z_n)\in\C^{n+1}$ for some $n\geq 0$, $\deg_{z_0}(a)\leq 2$ and $\deg_{z_0}(b)\leq 1$. We
show that (after possibly reordering the $z_i$) the condition 
\begin{enumerate}[flushleft]
\item[(A)] There is some $k\geq0$ such that $\deg_{z_i}(a)\leq 2$ and $\deg_{z_i}(b)\leq 1$ for all $i\leq k$ and for all common zeroes $\bar q =
(q_0,\ldots,q_n)$ of $a,\frac{\partial a}{\partial
z_0},\ldots,\frac{\partial a}{\partial z_k}$, we have $b(\bar q)\neq 0$, and there is some $j\leq k$ such that $\frac{\partial a}{\partial
z_j}$ does not vanish along the curve $\lbrace z_i=q_i \text{ for all } i\neq j \rbrace \subset \C^{n+1}$.
\end{enumerate}
is a sufficient condition for $\aut(X)$ to act transitively on $X$. If additionally 
\begin{enumerate}[flushleft]
\item[(B)] There is some $k\geq0$ such that $\deg_{z_i}(a)\leq 2$ and $\deg_{z_i}(b)\leq 1$ for all $i\leq k$ and there is no $c\in\C^*$ for which the
polynomials $\frac{\partial a}{\partial{z_i}} + c\frac{\partial
b}{\partial{z_i}}$ for $i\leq k$ are all constant to zero.
\end{enumerate}
holds, then also $\autw(X)$ acts transitively (Proposition \ref{prop-trans}).

In Section \ref{sec-pos} we apply Theorem \ref{criterion} to these kind of surfaces for $n>0$. This leads to our
Main Theorem.
\begin{maintheorem*}
Let $n\geq 0$ and $a,b\in\C[z_0,\ldots,z_n]$ such that $\deg_{z_0}(a)\leq 2$, $\deg_{z_0}(b)\leq 1$ and not both of $\deg_{z_0}(a)$ and $\deg_{z_0}(b)$ are
equal to zero. Let $\bar z = (z_0,\ldots z_n)$. Then the
hypersurface $X=\lbrace x^2y=a(\bar z) + xb(\bar z)\rbrace$ has the density property provided that the holomorphic automorphisms $\aut(X)$ act
transitively on $X$. In particular $X$ has the density property if (A) holds or if $n=0$.

Moreover, if $\H^{n+1}(X,\C)=0$ and the volume preserving holomorphic automorphisms $\autw(X)$ act transitively on $X$ then $X$ has the volume density property 
for the volume form $\omega=\d x/x^2\wedge \d z_0\wedge\ldots\wedge \d z_n$. In
particular the transitivity condition holds if (A) and (B) hold or if $n=0$.
\end{maintheorem*}

The proof of the Main Theorem is finished in Section \ref{sec-zero} where the case $n=0$ is done by explicit calculations, not using the methods described in 
Section 2.

It is worth pointing out that the Main Theorem together with Corollary \ref{cor-koras} implies that the Koras-Russell Cubic Threefold $C = \lbrace x^2y + x +
z_0^2 + z_1^3=0\rbrace$ has the (volume) density
property. The Threefold $C$ is a famous example of a affine variety which is diffeomorphic to $\R^6$ but not
algebraically isomorphic to $\C^3$, e.g. see \cite{ma}. As an affine algebraic variety $C$ (in particular the algebraic automorphism group of $C$) is
well
understood, e.g. see \cite{dumopo}. For example it is know that the algebraic automorphisms does not act transitively on $C$. The
density property implies that the situation in the
holomorphic context is completely different. However, it is still unclear if $C$ is biholomorphic to $\C^3$. Related to this question there is a conjecture of
T\'oth and Varolin. The conjecture \cite{va} states that a manifold which
has the density property and which is diffeomorphic to
$\C^n$ is
automatically biholomorphic to $\C^n$. If the conjecture holds then the Main theorem would imply that $C$ is isomorphic to $\C^3$.

\section{Proof of Theorem \ref{criterion}}\label{sec-crit}
Let $X$ be a Stein manifold of dimension $n$, and let $\O_X$ be the sheaf of holomorphic functions on $X$.
\subsection{Preliminaries}
Let $\F$ be a coherent sheaf of $\O_X$-modules, and let
$s_1,\ldots,s_N \in \F(X)$ be global sections. The following lemmas are standard applications of
sheaf theory.
\begin{lemma}\label{lem-pre1}
 Let $p\in X$, and let $\mu_p\subset\O_X(X)$ be corresponding ideal. If the elements $s_i + \mu_p\F(X)$ span the vector space $\F(X)/\mu_p\F(X)$ then the
localizations $(s_i)_p$ generate the stalk $\F_p$.
\end{lemma}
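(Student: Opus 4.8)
The plan is to deduce the lemma from Cartan's Theorems A and B for coherent analytic sheaves on the Stein manifold $X$, combined with Nakayama's lemma over the local ring $\O_{X,p}$. Write $\mathfrak{m}_p\subset\O_{X,p}$ for the maximal ideal and let $M\subseteq\F_p$ be the $\O_{X,p}$-submodule generated by the germs $(s_1)_p,\ldots,(s_N)_p$; the goal is $M=\F_p$. Since $\F$ is coherent, $\F_p$ is a finitely generated $\O_{X,p}$-module, hence so is $\F_p/M$, and because $\O_{X,p}$ is local, Nakayama's lemma reduces everything to showing $\F_p=M+\mathfrak{m}_p\F_p$.

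To get this I would assemble two standard facts. First, by Cartan's Theorem A the image of the evaluation map $\F(X)\to\F_p$ generates $\F_p$ as an $\O_{X,p}$-module, so it is enough to check $(s)_p\in M+\mathfrak{m}_p\F_p$ for each global section $s\in\F(X)$. Second, the image of the ideal $\mu_p$ under $\O_X(X)\to\O_{X,p}$ generates $\mathfrak{m}_p$: since $X$ is Stein there exist global holomorphic functions $g_1,\ldots,g_n\in\mu_p$ forming holomorphic coordinates centered at $p$, and by the holomorphic Hadamard lemma every germ vanishing at $p$ is an $\O_{X,p}$-linear combination of the $(g_j)_p$. Putting these together, $\mathfrak{m}_p\F_p$ is precisely the $\O_{X,p}$-submodule of $\F_p$ generated by the image of $\mu_p\F(X)$.

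With this setup the conclusion follows directly from the hypothesis: for any $s\in\F(X)$ we may write $s=\sum_i c_i s_i+t$ with $c_i\in\C$ and $t\in\mu_p\F(X)$, and passing to germs gives $(s)_p=\sum_i c_i(s_i)_p+(t)_p\in M+\mathfrak{m}_p\F_p$. Since such germs generate $\F_p$, we obtain $\F_p=M+\mathfrak{m}_p\F_p$, and Nakayama then gives $\F_p/M=0$, i.e.\ the $(s_i)_p$ generate $\F_p$.

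The only part that is not purely formal module theory is the claim that globally defined functions already generate the maximal ideal $\mathfrak{m}_p$ of the local ring $\O_{X,p}$; this is exactly where the Stein hypothesis on $X$ enters, through the existence of global coordinate functions vanishing at $p$. I expect that to be the point requiring the most care, although it is entirely classical, and the remainder is bookkeeping with Theorem A and Nakayama.
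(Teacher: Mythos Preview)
Your proof is correct and follows essentially the same route as the paper: both reduce to Nakayama's lemma over the local ring $\O_{X,p}$ applied to the quotient $\F_p/M$ (the paper calls this $\L_p$), after first verifying $\F_p=M+\mathfrak m_p\F_p$. You are in fact more explicit than the paper about two points it takes for granted---the use of Cartan's Theorem~A to ensure that germs of global sections generate $\F_p$, and the identification of $\mathfrak m_p$ with the ideal generated by the image of $\mu_p$---so your write-up is, if anything, more complete.
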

\begin{proof}
 Let $\G_p$ be the $(\O_X)_p$-submodule of $\F_p$ generated by $(s_1)_p,\ldots, (s_N)_p$. The assumption implies that we have $(\G_p +
\mu_p\F_p)/\mu_p\F_p=
\F_p/\mu_p\F_p$. Let $\L_p = \F_p/\G_p$ then a short calculation using the isomorphism theorems for modules shows that we have $\L_p / \mu_p\L_p = 0$. The ring
$(\O_X)_p$ is local, so by the Nakayama Lemma we may lift a basis of $\L_p/\mu_p\L_p$ to a generating set of $\L_p$, and thus $\L_p = 0$. This yields 
$\G_p=\F_p$
which shows the claim.
\end{proof}
\begin{lemma}\label{lem-pre2}
 If the elements $(s_i)_p$ generate the stalks $\F_p$ for all points $p\in X$ then every global section $\nu\in\F(X)$ is of the form $\sum f_i s_i$ for some
global holomorphic functions $f_i\in \O_X(X)$.
\end{lemma}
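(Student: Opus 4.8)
The plan is to reformulate the statement as a surjectivity statement for a sheaf morphism and then apply Cartan's Theorem B. First I would introduce the $\O_X$-module homomorphism $\varphi\colon \O_X^N \to \F$ given on sections by $(g_1,\ldots,g_N)\mapsto \sum_i g_i s_i$. The hypothesis that $(s_1)_p,\ldots,(s_N)_p$ generate $\F_p$ for every $p\in X$ says exactly that the induced map on stalks $(\O_X)_p^N \to \F_p$ is surjective for all $p$; hence $\varphi$ is an epimorphism of sheaves.

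Next I would pass to the kernel $\mathcal{K}=\ker\varphi$, yielding the short exact sequence of $\O_X$-modules
\[
0 \longrightarrow \mathcal{K} \longrightarrow \O_X^N \xrightarrow{\ \varphi\ } \F \longrightarrow 0 .
\]
Since $\O_X$ is coherent (Oka's coherence theorem), the finite direct sum $\O_X^N$ is coherent, and as the kernel of a morphism between coherent sheaves, $\mathcal{K}$ is coherent as well. This is the only slightly non-formal input on the algebraic side.

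Finally I would take the long exact cohomology sequence and read off its relevant segment
\[
\H^0(X,\O_X^N) \longrightarrow \H^0(X,\F) \longrightarrow \H^1(X,\mathcal{K}).
\]
Because $X$ is Stein and $\mathcal{K}$ is coherent, Cartan's Theorem B gives $\H^1(X,\mathcal{K})=0$, so $\H^0(X,\O_X^N)\to\H^0(X,\F)$ is surjective. Unwinding the definition of $\varphi$ on global sections, this surjectivity says precisely that every $\nu\in\F(X)$ is of the form $\sum_i f_i s_i$ with $f_i\in\O_X(X)$, which is the claim. There is no genuine obstacle here: the content is entirely carried by the coherence of $\mathcal{K}$ (Oka) and the vanishing of first cohomology of coherent sheaves on Stein manifolds (Cartan~B); the one point requiring care is verifying that $\varphi$ is surjective as a morphism of sheaves, and this is exactly what the stalkwise generation hypothesis provides.
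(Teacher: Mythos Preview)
Your proof is correct and follows essentially the same route as the paper: define the sheaf morphism $\varphi\colon \O_X^N\to\F$, use the stalkwise generation hypothesis to get surjectivity, form the short exact sequence with $\ker\varphi$, and apply Cartan's Theorem~B to the long exact cohomology sequence to conclude surjectivity on global sections. Your version is in fact slightly more explicit about why $\ker\varphi$ is coherent.
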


\begin{proof}
 Consider the morphisms of sheaves $\varphi: \O_X^N \rightarrow \F$ given by $(f_i)\mapsto \sum f_i s_i$. By assumption $\varphi$ is surjective on the level of
stalk. Therefore we get the following short exact sequence of coherent sheaves:
$$ 0 \rightarrow \ker \varphi \rightarrow \O_X^N \rightarrow \F \rightarrow 0.$$
This yields the following long exact sequence:
$$\cdots\rightarrow \H^0(X,\O_X^N) \rightarrow \H^0(X,\F)\rightarrow \H^1(X,\ker \varphi)\rightarrow \cdots.$$
By Theorem B of Cartan we have $\H^1(X,\ker\varphi)=0$. Thus the left map is surjective, and therefore every global section $\nu \in \H^0(X,\F)=\F(X)$ is of 
the desired
form.
\end{proof}
Recall that a domain $Y\subset X$ is called Runge if all holomorphic function on $Y$ can be approximated uniformly on compacts $K\subset Y$ by global
holomorphic functions on $X$.
\begin{lemma}\label{lem-pre3}
 Let $Y\subset X$ be a domain of $X$ which is Runge and Stein. If the elements $(s_i)_p$ generate the stalks $\F_p$ for all points $p\in Y$ then every
global section $\nu\in\F(X)$ can be uniformly approximated on compacts $K\subset Y$ by global sections of the form $\sum f_i s_i$ for some global holomorphic
functions $f_i\in\O_X(X)$.
\end{lemma}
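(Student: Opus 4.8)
The plan is to reduce the statement to Lemma \ref{lem-pre2}, applied with $Y$ in place of $X$, together with the Runge property of $Y\subset X$. First I would note that $Y$ is itself a Stein manifold over which the sections $s_i|_Y$ still generate every stalk $(\F|_Y)_p=\F_p$, $p\in Y$. Hence the proof of Lemma \ref{lem-pre2} carries over without change: the morphism $\varphi\colon\O_Y^N\to\F|_Y$, $(g_i)\mapsto\sum_i g_i\,(s_i|_Y)$, is surjective on stalks, its kernel is coherent, and $\H^1(Y,\ker\varphi)=0$ by Cartan's Theorem B, so $\H^0(Y,\O_Y^N)\to\H^0(Y,\F|_Y)$ is surjective. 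Therefore the restriction $\nu|_Y$ can be written as $\nu|_Y=\sum_i g_i\,(s_i|_Y)$ for suitable $g_i\in\O_X(Y)$.

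Next I would fix a compact $K\subset Y$ and $\varepsilon>0$. Since $Y$ is Runge in $X$, each $g_i$ can be approximated uniformly on $K$ by a global holomorphic function $f_i\in\O_X(X)$, so the difference $\sum_i f_i s_i-\nu=\sum_i(f_i-g_i)s_i$ is a combination of the fixed sections $s_i$ with uniformly small coefficients on $K$. To turn this into a bound on $\sum_i f_i s_i-\nu$ measured as a section of $\F$, I would fix beforehand, independently of $\varepsilon$, a way of measuring the size of sections of $\F$ over a neighbourhood of $K$ (for instance, cover $K$ by finitely many open sets over which $\F$ is a quotient of a trivial sheaf, so that each $s_i$ is bounded there). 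The error is then controlled by $\big(\sum_i\sup_K\|s_i\|\big)\max_i\sup_K|f_i-g_i|$, which is less than $\varepsilon$ once the $f_i$ are chosen close enough.

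The only step that requires a little care, and the one I would write out most explicitly, is this bookkeeping about what ``uniform approximation on compacts'' means for sections of the coherent sheaf $\F$: the local presentation of $\F$ and the resulting norm must be fixed before $\varepsilon$ is chosen, after which the estimate above is immediate. In every situation where this lemma is applied, $\F$ is the tangent sheaf or one of its exterior powers, hence locally free, so this reduces to the familiar statement that the coefficient functions in a local frame converge uniformly, and no genuine obstacle arises. Everything else is a direct appeal to Theorem B and to the definition of a Runge domain.
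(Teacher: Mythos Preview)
Your proposal is correct and follows essentially the same route as the paper: restrict $\nu$ to $Y$, apply Lemma~\ref{lem-pre2} on the Stein manifold $Y$ to write $\nu|_Y=\sum g_i s_i|_Y$ with $g_i\in\O_X(Y)$, then use the Runge property to approximate each $g_i$ by a global $f_i\in\O_X(X)$. The only difference is that the paper simply cites Lemma~\ref{lem-pre2} rather than re-running its proof, and does not pause to discuss what uniform approximation means for sections of $\F$; your extra care on that point is not needed for the argument but does no harm.
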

\begin{proof}
 Let $\nu\vert_Y\in\F(Y)$ be the restriction of $\nu$ to $Y$. By Lemma \ref{lem-pre2} we have $\nu\vert_Y = \sum g_i s_i\vert_Y$ for some holomorphic functions
$g_i\in\O_X(Y)$ on $Y$. Since $Y$ is a Runge domain we may approximate the functions $g_i$ by global functions $f_i\in\O_X(X)$ uniformly on compacts $K\subset
Y$. Thus the global section $\nu$ can be approximated by sections $\sum f_is_i$ uniformly on compacts $K\subset Y$.
\end{proof}

\subsection{Criterion for (volume) density property}
The following definition is due to \cite{kaku-density}, but adapted to the holomorphic case.
\begin{definition}\label{def-generate}
Let $p\in X$. A set $U\subset \T_pX$ is called generating set for $\T_pX$ if the orbit of $U$ of the induced action of the stablilizer $\aut(X)_p$ contains a
basis of $\T_pX$.

If $X$ has a volume form $\omega$ then a set $U\subset \T_pX\wedge\T_pX$ is called $\omega$-generating set for $\T_pX\wedge \T_pX$ if the orbit of $U$ of the
induced action of the stabilizer $\autw(X)_p$ contains a basis of $\T_pX\wedge\T_pX$.
\end{definition}
The next proposition a powerful criterion for the density property. It is a generalization of Theorem 2 in \cite{kaku-density} and the proof is similar.
\begin{proposition}\label{prop-crit}
 Let $X$ be a Stein manifold such that $\aut(X)$ acts transitively on $X$. Assume that there are complete vector fields $\nu_1,\ldots,\nu_N
\in \chvf(X)$ which generate a submodule that is contained in the closure of $\lie(\chvf(X))$ and assume that there is a point $p\in X$ such that the tangent
vectors
$\nu_i[p]\in \mathrm{T}_pX$ are a generating set for the tangent space $\mathrm{T}_pX$. Then $X$ has the density property.
\end{proposition}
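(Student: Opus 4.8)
The plan is to combine the three sheaf-theoretic lemmas from the Preliminaries with the transitivity of $\aut(X)$ and the Anders\'en--Lempert theorem. First I would observe that by hypothesis the vector fields $\nu_1,\dots,\nu_N$ generate a submodule $M\subset \hvf(X)$ which lies in the closure of $\lie(\chvf(X))$; it thus suffices to show that $M$ is in fact dense in $\hvf(X)$, or more precisely that an arbitrary holomorphic vector field can be approximated uniformly on compacts by elements of $M$. The obstruction to $M = \hvf(X)$ is purely local: by Lemma \ref{lem-pre2}, if the localizations $(\nu_i)_p$ generate the stalk $(\hvf)_p = (\T X)_p$ as an $(\O_X)_p$-module at every point $p$, then every global vector field is of the form $\sum f_i\nu_i$ and we are done outright. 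In general this stalk-generation will only hold on some open set, and the role of transitivity is exactly to spread the good locus around.

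The key step is to show that the set $\Sigma$ of points where $(\nu_1)_p,\dots,(\nu_N)_p$ fail to generate the stalk $(\T X)_p$ has a particular structure. By Lemma \ref{lem-pre1}, stalk generation at $p$ is implied by the $\nu_i[p]$ spanning $\T_pX$ (taking $\F = \T X$ and noting $\F(X)/\mu_p\F(X) = \T_pX$), so $\Sigma$ is contained in the analytic set where the $\nu_i$ are everywhere linearly dependent, a proper analytic subset once we know (from the generating-set hypothesis) that the span is all of $\T_pX$ at the distinguished point $p_0$ after applying stabilizer automorphisms. Now I use transitivity: given any point $q\in X$, there is a holomorphic automorphism sending $p_0$ to $q$, and by the definition of a generating set (Definition \ref{def-generate}) the orbit of $\{\nu_i[p_0]\}$ under $\aut(X)_{p_0}$ contains a basis; composing, one produces finitely many automorphisms $\varphi_1,\dots,\varphi_M$ so that the combined collection $\{(\varphi_j)_*\nu_i\}$ spans $\T_qX$. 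The subtlety is that $(\varphi_j)_*\nu_i$ need not lie in $M$; but each $\varphi_j$ can be approximated, uniformly on a chosen compact, by compositions of time-$t$ flows of the complete fields $\nu_i$ via the Main Theorem of Anders\'en--Lempert theory (using that $X$ has enough complete fields — indeed the $\nu_i$ themselves — or more carefully, that the relevant flows on a Runge-Stein neighborhood are approximable), and under such approximation the pushforward of a field in $\lie(\chvf(X))$ stays in $\overline{\lie(\chvf(X))}$. Hence after shrinking to a Runge and Stein neighborhood $Y$ of $q$ on which all the pushed-forward fields together with the original $\nu_i$ do generate every stalk, Lemma \ref{lem-pre3} lets us approximate an arbitrary global vector field on compacts in $Y$ by elements of $\lie(\chvf(X))$.

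Finally I would cover $X$ by countably many such Runge-Stein neighborhoods $Y_k$, exhaust a given compact $K\subset X$ by finitely many of them, and patch the local approximations — this last patching uses that a global holomorphic vector field which is well-approximated on a Runge-Stein open cover is well-approximated on compacts, again relying on Cartan's Theorem B / the Runge property, in the same spirit as the proof of Lemma \ref{lem-pre3}. Putting this together yields that $\hvf(X) \subset \overline{\lie(\chvf(X))}$, which is the density property.

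The main obstacle I anticipate is the passage from ``$\aut(X)$ acts transitively and the stabilizer orbit of the $\nu_i[p]$ contains a basis'' to ``the \emph{complete} fields and their \emph{flows} suffice to approximate'': one must be careful that pushing forward by a genuine automorphism $\varphi\in\aut(X)$ (as opposed to a time-$t$ map of one of the $\nu_i$) keeps us inside the closure of $\lie(\chvf(X))$. The clean way around this is to note that $(\varphi)_*\nu$ is complete whenever $\nu$ is and $\varphi\in\aut(X)$, so $(\varphi)_*\nu_i\in\chvf(X)\subset\lie(\chvf(X))$ directly, with no approximation of $\varphi$ needed at all; the Anders\'en--Lempert machinery is then only invoked, if at all, at the very end to confirm that the closure is a Lie algebra and behaves well under the Runge patching. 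I would double-check that this shortcut is available in the holomorphic (as opposed to algebraic) setting, since that is precisely the point where this criterion departs from \cite{kaku-density}.
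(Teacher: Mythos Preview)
Your core strategy matches the paper's: enlarge the collection $\{\nu_i\}$ by pullbacks $\varphi^*\nu_i$ under automorphisms so that the tangent vectors span everywhere on a suitable set, then invoke Lemmas~\ref{lem-pre1} and~\ref{lem-pre3}. You also correctly spot (after the unnecessary Anders\'en--Lempert detour) that $\varphi^*\nu_i$ is again complete, so no approximation of $\varphi$ is needed. One point you leave implicit is why the \emph{submodule} generated by the enlarged collection is still in the closure of $\lie(\chvf(X))$: it is not enough that $\varphi^*\nu_i$ is complete; you need $f\cdot\varphi^*\nu_i\in\overline{\lie(\chvf(X))}$ for every $f$. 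This follows because $\varphi^*$ is a continuous Lie-algebra automorphism mapping $\chvf(X)$ to itself, so $f\cdot\varphi^*\nu_i=\varphi^*\big((f\circ\varphi^{-1})\nu_i\big)\in\varphi^*\big(\overline{\lie(\chvf(X))}\big)=\overline{\lie(\chvf(X))}$.

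The genuine gap is in your final ``patching'' step. You propose to cover a compact $K$ by finitely many small Runge--Stein sets $Y_{q_1},\dots,Y_{q_m}$, produce on each $Y_{q_i}$ an approximation of $\nu$ by some $\mu_i\in\lie(\chvf(X))$, and then glue. But the $\mu_i$ are different global vector fields on different pieces, and $\lie(\chvf(X))$ is not an $\O_X$-module, so a partition-of-unity combination $\sum\chi_i\mu_i$ does not stay in $\lie(\chvf(X))$. Even if you instead take the union of all the pushforwards used across the $q_i$, this collection spans on $\bigcup_i Y_{q_i}$, but Lemma~\ref{lem-pre3} requires a \emph{single} Runge--Stein domain on which the stalks are generated, and there is no reason a Runge--Stein neighbourhood of $K$ sits inside $\bigcup_i Y_{q_i}$.

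The paper avoids this by reversing the order of operations: \emph{first} fix one Runge--Stein neighbourhood $Y\supset K$ with compact closure, and \emph{then} enlarge the collection of vector fields until they span $\T_aX$ at every $a\in Y$. The bad locus $A=\{a:\nu_i[a]\text{ do not span}\}$ is analytic; $A\cap\overline{Y}$ has finitely many irreducible components, and adding suitable pullbacks $\varphi^*\nu_i$ (with $\varphi$ moving a chosen point of a top-dimensional component into $Y\setminus A$, using transitivity) strictly lowers the maximal dimension of $A\cap Y$. After finitely many steps $A\cap Y=\emptyset$, and Lemma~\ref{lem-pre3} applies directly on $Y$---no patching needed.
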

\begin{proof}
We may assume that the vectors $\nu_i[p]$ contain a basis of $\T_pX$. Indeed, the vectors $\nu_i[p]$ are a generating set of $\T_pX$. Thus after adding some 
pull
backs of some vector fields $\nu_i$ by automorphisms in $\aut(X)_p$ we get the desired basis of $\T_pX$. Let $A\varsubsetneq X$ be the analytic subset of points
where the vectors $\nu_i[a]$ do not span the whole tangent space $\T_aX$. 

 Let $K\subset X$ be a compact set. After replacing $K$ by its $\O_X$-convex hull we may assume that $K$ is $\O_X$-convex. Let $Y$ be a neighborhood of $K$
which is Stein and Runge, and moreover such that the closure of $Y$ is compact. See e.g. the beginning of the proof of Theorem 7 in \cite{ty} for the existence
of
such a $Y$. 

After adding finitely many complete vector fields to $\nu_1,\ldots,\nu_N$ we get that $Y\cap A = \emptyset$. Indeed, since the closure of $Y$ is compact, $Y
\cap A$ is a finite union of irreducible analytic subsets. Let $A_0\subset A$ be an irreducible component of maximal dimension. Pick any $a\in A_0$ and
$\phi\in\aut(X)$ such that $\phi(a)\in Y\setminus A$. Since the vectors $\nu_i[\phi(a)]$ span the tangent space $\T_{\phi(a)}X$ the vectors $(\phi^*\nu_i)[a]$
span the tangent space $\T_aX$. Thus after adding some of the pull backs to $\nu_1,\ldots,\nu_N$ the component $A_0 \cap Y$ is replaced by
finitely many components of lower dimension. Repeating the same procedure, inductively we get after finitely steps a list of complete vector fields 
$\nu_1,\ldots,\nu_N$ such
that $A\cap Y =\emptyset$.

Let $\F$ be the coherent sheaf corresponding to the tangent bundle. The fact that the vectors $\nu_i[a]$ span $\T_aX$ for all $a\in Y$ translates to the fact
the elements $\nu_i + \mu_a\F$ span the vector space $\F/\mu_a\F$ for all $a\in Y$, where $\mu_a$ is the maximal ideal of $a$. Thus by Lemma \ref{lem-pre1} the
assumption of Lemma \ref{lem-pre3} holds. Therefore ever vector field on $X$ can be approximated uniformly on $K$ by elements of the form $\sum f_i \nu_i$ for
some regular functions $f_i \in \O_X(X)$. By assumption the submodule generated by $\nu_1,\ldots,\nu_N$ is contained in the closure of $\lie(\chvf(X))$ (note
that this property still holds after enlarging the list $\nu_1,\ldots,\nu_N$ in the procedure above). Therefore every holomorphic vector field is in the closure
of $\lie(\chvf(X))$.
\end{proof}

For the volume case the proof can be found in \cite{kakuconference}. For completeness indicate a proof here. We introduce the following isomorphisms. The 
article \cite{kaku-volume2} is a good reference for this methods. Note the reference is written
only for the algebraic
case, however all isomorphisms exist identically in the holomorphic case. Let $\omega$ be a holomorphic volume form. For $0\leq i\leq n$ let $\sC_i(X)$ be the
vector space of holomorphic $i$-forms on $X$. Moreover let $\Z_i(X)\subset\sC_i(X)$ be the vector space of closed $i$-forms, and let $\B_i(X)\subset\Z_i(X)$ be
the vector space of exact $i$-forms. Then we have the isomorphism:
\[ 
 \Phi: \hvf(X) \ \tilde\rightarrow \ \sC_{n-1}(X), \quad \nu \mapsto i_v\omega,
\]
and in the same spirit we also have the isomorphism $\Psi$ induced by:
\[
 \Psi: \hvf(X)\wedge\hvf(X) \ \tilde\rightarrow \ \sC_{n-2}(X), \quad \nu\wedge\mu \mapsto i_\nu i_\mu \omega.
\]
By definition $\hvfw(X)$ consists of those vector fields $\nu$ such that $L_\nu\omega=\d i_\nu\omega = 0$ holds and thus the isomorphism $\Phi$ restricts to an
isomorphisms
\[
 \Theta = \Phi\vert_{\hvfw(X)}: \hvfw(X) \ \tilde\rightarrow \ \Z_{n-1}(X).
\]
Moreover, we consider the outer differential
\[
 D: \sC_{n-2}(X)\twoheadrightarrow \B_{n-1}(X)
\]
of $(n-2)$-forms.

\begin{lemma}\label{lem-3.1}
 Let $\alpha,\beta\in\hvfw(X)$. Then $i_{[\alpha,\beta]}\omega = \d i_\alpha i_\beta \omega$.
\end{lemma}
\begin{proof}
 Proposition 3.1 in \cite{kaku-volume2}.
\end{proof}

The next proposition is in some sense the version Proposition \ref{prop-crit} in the volume preserving case. It is an adaption of Theorem 1 in
\cite{kaku-volume2}.
\begin{proposition}\label{prop-critw}
 Let $X$ be a Stein manifold such that $\autw(X)$ acts transitively on $X$. Assume that every class of $\H^{n-1}(X,\C)=\Z_{n-1}(X)/\B_{n-1}(X)$ contains
an element of $\Theta(\lie(\chvfw(X)))$. Moreover, assume that
there are complete vector fields $\nu_1,\ldots,\nu_N \in \chvfw(X)$ and $\mu_1,\ldots,\mu_N\in\chvfw(X)$ such that the submodule of
$\hvf(X)\wedge\hvf(X)$ generated by the elements $\nu_j\wedge\mu_j$ is contained in the closure of $\lie(\chvfw(X))\wedge\lie(\chvfw(X))$, and assume that there
is a point
$p\in X$ such that the vectors $\nu_j[p]\wedge\mu_j[p]$ are a $\omega$-generating set for the vector space $\mathrm{T}_pX\wedge\mathrm{T}_pX$. Then $X$ has the
volume density property.
\end{proposition}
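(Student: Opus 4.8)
The plan is to mimic the proof of Proposition \ref{prop-crit}, but carried out one exterior power up and then transported back to vector fields via the isomorphisms $\Psi$ and $\Theta$. First I would replace "$\omega$-generating set" by an honest basis: since the $\nu_j[p]\wedge\mu_j[p]$ form an $\omega$-generating set of $\T_pX\wedge\T_pX$, after adjoining finitely many pull-backs $\phi^*(\nu_j\wedge\mu_j)=(\phi^*\nu_j)\wedge(\phi^*\mu_j)$ by automorphisms $\phi\in\autw(X)_p$ — which are again complete volume-preserving and whose wedges still lie in the closure of $\lie(\chvfw(X))\wedge\lie(\chvfw(X))$ — we may assume the $\nu_j[p]\wedge\mu_j[p]$ contain a basis of $\T_pX\wedge\T_pX$. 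Let $A\subsetneq X$ be the analytic set where they fail to span. Then, exactly as in Proposition \ref{prop-crit}, given a compact $K$ we choose an $\O_X$-convex $K$ and a relatively compact Stein Runge neighbourhood $Y\supset K$; using transitivity of $\autw(X)$ and pulling back, we enlarge the list $\{\nu_j\wedge\mu_j\}$ by finitely many elements of $\lie(\chvfw(X))\wedge\lie(\chvfw(X))$ until $Y\cap A=\emptyset$. Applying Lemmas \ref{lem-pre1} and \ref{lem-pre3} to the coherent sheaf $\F$ corresponding to $\T X\wedge\T X$, every global section of $\F$ — in particular every bivector field — can be approximated uniformly on $K$ by sections of the form $\sum f_j\,\nu_j\wedge\mu_j$ with $f_j\in\O_X(X)$, hence every element of $\hvf(X)\wedge\hvf(X)$ lies in the closure of $\lie(\chvfw(X))\wedge\lie(\chvfw(X))$.

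Next I would translate this into a statement about $(n-2)$-forms and $(n-1)$-forms. Under $\Psi$, the closure of $\lie(\chvfw(X))\wedge\lie(\chvfw(X))$ maps into the closure of a subspace $V\subset\sC_{n-2}(X)$, and by the previous paragraph $V$ is dense in $\sC_{n-2}(X)$. Now apply the outer differential $D:\sC_{n-2}(X)\twoheadrightarrow\B_{n-1}(X)$. By Lemma \ref{lem-3.1}, for $\alpha,\beta\in\hvfw(X)$ one has $i_{[\alpha,\beta]}\omega=\d\,i_\alpha i_\beta\omega$, i.e. $\Theta([\alpha,\beta])=D\,\Psi(\alpha\wedge\beta)$; therefore $D$ sends $\Psi(\lie(\chvfw(X))\wedge\lie(\chvfw(X)))$ into $\Theta(\lie(\chvfw(X)))$. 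Since $D$ is continuous and surjective onto $\B_{n-1}(X)$ and $V$ is dense in $\sC_{n-2}(X)$, it follows that $\B_{n-1}(X)$ is contained in the closure of $\Theta(\lie(\chvfw(X)))$.

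Finally I would combine this with the hypothesis on cohomology. Let $\eta\in\Z_{n-1}(X)$ be arbitrary, so $\eta=\Theta(\nu)$ for a unique $\nu\in\hvfw(X)$, and we want $\nu$ in the closure of $\lie(\chvfw(X))$, equivalently $\eta$ in the closure of $\Theta(\lie(\chvfw(X)))$. By assumption the class $[\eta]\in\H^{n-1}(X,\C)=\Z_{n-1}(X)/\B_{n-1}(X)$ contains an element $\eta_0\in\Theta(\lie(\chvfw(X)))$, so $\eta-\eta_0\in\B_{n-1}(X)$, which by the previous paragraph lies in the closure of $\Theta(\lie(\chvfw(X)))$; hence so does $\eta=\eta_0+(\eta-\eta_0)$. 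Applying $\Theta^{-1}$ (a topological isomorphism onto $\hvfw(X)$ with the compact-open topology) gives $\hvfw(X)\subset\overline{\lie(\chvfw(X))}$, which is the volume density property.

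I expect the main obstacle to be the bookkeeping in the second and third paragraphs: one must be careful that $\Psi$ is a topological isomorphism so that density is preserved, that $D$ is continuous (clear, since exterior differentiation is continuous in the compact-open topology on forms), and that the submodule/closure conditions are genuinely stable under the pull-back enlargement step — the point being that $\phi^*$ for $\phi\in\autw(X)$ is a continuous Lie-algebra automorphism preserving $\chvfw(X)$, so it preserves $\overline{\lie(\chvfw(X))}$ and its wedge square. The geometric-sheaf-theoretic part (paragraph one) is a verbatim transcription of Proposition \ref{prop-crit} with $\T X$ replaced by $\T X\wedge\T X$ and requires no new idea.
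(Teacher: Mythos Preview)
Your proposal is correct and follows essentially the same route as the paper's proof: the paper simply cites ``identical arguments as in the proof of Proposition \ref{prop-crit}'' for your first paragraph, and then (in the reverse order of your last two steps) subtracts the cohomology representative first to make $\Theta(\zeta)$ exact, writes $\Theta(\zeta)=D\Psi(\gamma)$, approximates $\gamma$ on $K$, and invokes Lemma \ref{lem-3.1} exactly as you do. Your bookkeeping remarks about continuity of $\Psi$, $D$, $\Theta^{-1}$ and stability under pull-back by $\autw(X)$ are the right points to watch, and the paper glosses over them in the same way.
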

\begin{proof}
 Let $K\subset X$ be a compact set. By the identical arguments as in the proof of Proposition \ref{prop-crit} we see that every element of
$\hvf(X)\wedge\hvf(X)$ may be uniformly approximated on $K$ by elements contained in $\lie(\chvfw(X))\wedge\lie(\chvfw(X))$.

Let $\zeta\in\hvfw(X)$. By the first assumption we may, after subtracting an element of $\Theta(\lie(\chvfw(X)))$, assume that $\Theta(\zeta)\in\B_{n-1}(X)$.
Thus $\Theta(\zeta) = D(\Psi(\gamma))$ for some $\gamma \in \hvf(X)\wedge\hvf(X)$. Let us approximate $\gamma$ uniformly on $K$ by elements of the form $\sum
\alpha_i\wedge \beta_i \in \lie(\chvfw(X))\wedge\lie(\chvfw(X))$ with $\alpha_i,\beta_i \in \lie(\chvfw(X))$. We use Lemma \ref{lem-3.1} to see that
\[
 D (\Psi(\sum\alpha_i\wedge \beta_i)) = \sum \d i_{\alpha_i} i_{\beta_i} \omega = \sum i_{[\alpha_i,\beta_i]} \omega\in \Theta(\lie(\chvfw(X))).
\]
Since $\Theta(\zeta)=i_\zeta\omega$ can be approximated uniformly on $K$ by elements of the form $\sum i_{[\alpha_i,\beta_i]} \omega = \Theta(\sum
[\alpha_i,\beta_i])$ the vector field $\zeta$ can be approximated uniformly on $K$ by elements of the form $\sum [\alpha_i,\beta_i] \in \lie(\chvfw(X))$ which
proves the proposition. \end{proof}

\subsection{Semi-compatible and compatible pairs} This section provides a way to find the submodules which are required in Propositions \ref{prop-crit} and
\ref{prop-critw}. The parts \ref{def-scomp} - \ref{lem-comp} are e.g. from \cite{kaku-density} and \cite{kaku-volume2} and are here adapted to the holomorphic
case. For a vector field $\nu$ and a regular function $f$ we denote by $\nu(f)$ the regular function which is obtained by applying $\nu$ as a derivation and
$\ker \nu$ is the kernel of this linear map.
\begin{definition}\label{def-scomp}
 A semi-compatible pair is a pair $(\nu,\mu)$ of complete vector fields such that the closure of the linear span of the product of the kernels
$\ker\nu \cdot\ker\mu$
contains a non-trivial ideal $I \subset\O_X(X)$. We call $I$ the ideal of $(\nu,\mu)$.
\end{definition}
\begin{lemma}\label{lem-scomp}
 Let $(\nu,\mu)$ be a semi-compatible pair of volume preserving vector fields and $I$ be its ideal. Then the submodule of $\hvf(X)\wedge\hvf(X)$ given by
$I\cdot(\nu\wedge\mu)$ is contained in the closure of $\lie(\chvfw(X))\wedge\lie(\chvfw(X))$
\end{lemma}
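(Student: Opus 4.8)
The plan is to realize every element of $I\cdot(\nu\wedge\mu)$ as a limit of finite sums of wedges of complete volume preserving vector fields. The mechanism behind this is the standard fact that rescaling a complete vector field by a function in its kernel preserves both completeness and the volume preserving property.

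Concretely, I would first check that $f\nu\in\chvfw(X)$ for every $f\in\ker\nu$. It is holomorphic, and it is complete because $f$ is constant along the integral curves of $\nu$: if $\phi$ denotes the ($\C$-)flow of $\nu$, then $t\mapsto\phi(f(p)t,p)$ is an integral curve of $f\nu$ through $p$ defined for all $t\in\C$. For the divergence, $L_{f\nu}\omega=\d(f\, i_\nu\omega)=\d f\wedge i_\nu\omega+f\,L_\nu\omega=\d f\wedge i_\nu\omega$, and applying $i_\nu$ to the identically vanishing $(n+1)$-form $\d f\wedge\omega$ on the $n$-dimensional manifold $X$ gives $\nu(f)\,\omega=\d f\wedge i_\nu\omega$, which is $0$ since $f\in\ker\nu$. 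Symmetrically $g\mu\in\chvfw(X)$ for every $g\in\ker\mu$.

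Then, for $f\in\ker\nu$ and $g\in\ker\mu$ one has $fg\cdot(\nu\wedge\mu)=(f\nu)\wedge(g\mu)\in\chvfw(X)\wedge\chvfw(X)\subset\lie(\chvfw(X))\wedge\lie(\chvfw(X))$, and taking finite $\C$-linear combinations shows that the linear span of $\ker\nu\cdot\ker\mu$, multiplied by $\nu\wedge\mu$, lands inside $\lie(\chvfw(X))\wedge\lie(\chvfw(X))$. Since multiplication by the fixed bivector $\nu\wedge\mu$ is continuous on $\hvf(X)\wedge\hvf(X)$ for the compact-open topology, the \emph{closure} of that linear span, multiplied by $\nu\wedge\mu$, lands in the closure of $\lie(\chvfw(X))\wedge\lie(\chvfw(X))$. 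By the definition of the ideal of a semi-compatible pair we have $I\subset\overline{\mathrm{span}(\ker\nu\cdot\ker\mu)}$, whence $I\cdot(\nu\wedge\mu)\subset\overline{\lie(\chvfw(X))\wedge\lie(\chvfw(X))}$; moreover $I\cdot(\nu\wedge\mu)$ is an $\O_X(X)$-submodule simply because $I$ is an ideal.

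I expect the only point requiring care to be the verification that $f\nu$ remains volume preserving (the small identity $\d f\wedge i_\nu\omega=\nu(f)\,\omega$ obtained by contracting $\d f\wedge\omega=0$ with $\nu$), together with the bookkeeping in the last step: the approximating objects must be tracked inside the \emph{vector space} $\lie(\chvfw(X))\wedge\lie(\chvfw(X))$, not merely inside $\chvfw(X)\wedge\chvfw(X)$, so that arbitrary finite sums of decomposable elements are permitted.
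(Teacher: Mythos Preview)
Your proof is correct and follows essentially the same route as the paper's: rewrite $fg\cdot(\nu\wedge\mu)$ as $(f\nu)\wedge(g\mu)$ with $f\nu,g\mu\in\chvfw(X)$, pass to finite sums, and then to the closure. You supply more detail than the paper does---in particular the verification that $f\nu$ is complete and volume preserving, and the explicit continuity argument for passing to the closure---but the underlying argument is identical.
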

\begin{proof}
 Let $\tau=(\sum f_ig_i)\cdot(\nu\wedge\mu)$ with $f_i\in\ker\nu$ and $g_i\in\ker\mu$ be an arbitrary element of $\mathrm{span}\lbrace\ker\nu
\cdot\ker\mu\rbrace\cdot(\nu\wedge\mu)$.
Since $f_i\in\ker\nu$ we have $f_i\nu\in\chvfw(X)$ for all $i$ and similarly $g_i\nu\in\chvfw(X)$ for all $i$. Thus $\tau=\sum f_i\nu\wedge
g_i\mu\in\lie(\chvfw(X))\wedge\lie(\chvfw(X))$. Therefore the closure of $\mathrm{span}\lbrace\ker\nu \cdot\ker\mu\rbrace\cdot(\nu\wedge\mu)$ is contained in
the closure of
$\lie(\chvfw(X))\wedge\lie(\chvfw(X))$, and the claim follows.
\end{proof}

\begin{definition}\label{def-comp}
 A semi-compatible pair $(\nu,\mu)$ is called a compatible pair if there is an holomorphic function $h\in\O_X(X)$ with $\nu(h)\in\ker\nu\setminus 0$
and $h\in\ker\mu$. Note that this condition in particular implies that $h\nu$ is a complete vector field.
\end{definition}
\begin{lemma}\label{lem-comp}
 Let $(\nu,\mu)$ be a compatible pair, and let $I$ be its ideal and $h$ its function. Then the submodule of $\hvf(X)$ given by $I\cdot \nu(h)\cdot \mu$ is
contained in the closure of $\lie(\chvf(X))$.
\end{lemma}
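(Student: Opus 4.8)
The idea is to mimic the argument of Lemma \ref{lem-scomp}, but now using a single compatible pair to produce complete vector fields whose Lie brackets generate the desired submodule of $\hvf(X)$. Let $(\nu,\mu)$ be the compatible pair, $h$ its function, and $I$ its ideal, so by Definition \ref{def-comp} we have $\nu(h)\in\ker\nu\setminus 0$ and $h\in\ker\mu$; in particular $h\nu$ and $\nu$ are complete (since $h\in\ker\mu$ need not hold for $\nu$, but $h\nu$ is complete because $\nu(h)\in\ker\nu$ means the flow of $h\nu$ is conjugate to a scaled flow of $\nu$), and $f\mu$ is complete for every $f\in\ker\mu$. First I would compute the Lie bracket $[f\nu, g\mu]$ for $f\in\ker\nu$ and $g\in\ker\mu$: using the Leibniz rule and $\nu(f)=0$, $\mu(g)=0$ one gets $[f\nu,g\mu]=fg[\nu,\mu]+f\nu(g)\mu - g\mu(f)\nu$. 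This alone does not obviously land in $I\cdot\nu(h)\cdot\mu$, so the point is to choose $f$ and $g$ cleverly.

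The key trick is to take $g=h$ (legitimate since $h\in\ker\mu$) and observe the telescoping behaviour. With $g=h$ the bracket becomes $[f\nu,h\mu]=fh[\nu,\mu]+f\nu(h)\mu - h\mu(f)\nu$. The middle term $f\nu(h)\mu$ is exactly of the form we want once $f$ ranges over $\ker\nu$; the trouble is the two parasitic terms $fh[\nu,\mu]$ and $h\mu(f)\nu$. To kill them I would pair this with a second bracket, namely $[h\nu, f\mu]$ where now $f\in\ker\nu$ and we use that $h\nu$ is complete and $f\mu$ is complete: this gives $[h\nu,f\mu]=hf[\nu,\mu]+h\nu(f)\mu - f\mu(h)\nu = hf[\nu,\mu]+0 - 0 = hf[\nu,\mu]$ since $\nu(f)=0$ and $\mu(h)=0$. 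Subtracting, $[f\nu,h\mu]-[h\nu,f\mu]=f\nu(h)\mu - h\mu(f)\nu$. This is progress but still has the $\nu$-component $h\mu(f)\nu$ to dispose of.

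To eliminate the remaining $\nu$-term I would use the semi-compatibility more fully: elements of the ideal $I$ lie in the closed span of $\ker\nu\cdot\ker\mu$, so it suffices to treat $\tau = c\cdot f\nu(h)\cdot\mu$ with $c=\sum f_i g_i$, $f_i\in\ker\nu$, $g_i\in\ker\mu$, and then pass to closures. Writing $cf\in\ker\nu$ (as $\ker\nu$ is a ring containing the product $c f$ whenever $f\in\ker\nu$ — here one needs $c\in\ker\nu$, which follows since each $g_i\in\ker\mu\subset$ the relevant span only after multiplying by $f_i\in\ker\nu$; more carefully one argues term by term), the displayed difference of brackets produces $cf\nu(h)\mu$ modulo a $\nu$-multiple of the form $h\mu(cf)\nu$. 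Now $h\nu$ is complete and $\nu(h)\in\ker\nu$, so $\nu(h)\cdot h\nu$-type expressions and their brackets with suitable $\ker\mu$-multiples of $\mu$ can be used to absorb the $\nu$-component; concretely, since $\mu(cf)\in\O_X(X)$ is arbitrary, one uses that $\mu(h^2/2)=h\mu(h)=0$-type identities together with completeness of $h\nu$ to show $h\mu(cf)\nu$ is itself a bracket of two complete volume-not-required vector fields, hence in $\lie(\chvf(X))$. Collecting terms and taking closures yields $I\cdot\nu(h)\cdot\mu\subset\overline{\lie(\chvf(X))}$.

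The main obstacle I expect is precisely the bookkeeping to eliminate the parasitic $\nu$-direction terms: every Lie bracket of the natural complete fields $f\nu$, $g\mu$ produces, besides the wanted $\mu$-component, a multiple of $\nu$ and a multiple of $[\nu,\mu]$, and one must arrange cancellations so that only an $I$-multiple of $\nu(h)\mu$ survives. The compatibility hypothesis $\nu(h)\in\ker\nu\setminus 0$, $h\in\ker\mu$ is exactly what makes the cancellations work — it is the reason $[h\nu,f\mu]$ collapses to a single term above — but verifying that the leftover $\nu$-terms lie in $\lie(\chvf(X))$ (rather than merely that the $\mu$-part does) is the delicate step, and is where I would spend most of the effort.
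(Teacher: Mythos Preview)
Your proposal contains a genuine gap. In the second bracket $[h\nu,f\mu]$ you take $f\in\ker\nu$ and then assert that $f\mu$ is complete; but completeness of $f\mu$ requires $f\in\ker\mu$, not $\ker\nu$. With only $f\in\ker\nu$ the field $f\mu$ need not be complete, so $[h\nu,f\mu]$ is not known to lie in $\lie(\chvf(X))$, and the telescoping argument breaks down. This is also why you are left with the stubborn term $h\mu(f)\nu$: your attempted elimination of it is vague precisely because there is no legitimate complete field available to produce it.

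The paper's fix is short and avoids the problem entirely: keep \emph{both} free functions, $f\in\ker\nu$ and $g\in\ker\mu$, and multiply by $h$ on the appropriate side. Since $h\in\ker\mu$ one has $gh\in\ker\mu$, so $gh\mu$ is complete; since $\nu(h)\in\ker\nu$ one has $\nu^2(fh)=0$, so $fh\nu$ is complete as well. Now a direct computation (using $\nu(f)=0$, $\mu(g)=0$, $\mu(h)=0$) gives
\[
[f\nu,\,gh\mu]-[fh\nu,\,g\mu]=fg\,\nu(h)\,\mu,
\]
with the $[\nu,\mu]$- and $\nu$-components cancelling exactly. Hence every element of $\mathrm{span}\{\ker\nu\cdot\ker\mu\}\cdot\nu(h)\cdot\mu$ already lies in $\lie(\chvf(X))$, and passing to the closure gives $I\cdot\nu(h)\cdot\mu\subset\overline{\lie(\chvf(X))}$. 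The moral is that the extra factor $g\in\ker\mu$ is not optional: it is what keeps all four fields complete and makes the cancellation clean.
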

\begin{proof}
Let $f\in\ker\nu$ and $g\in\ker\mu$, then $f\nu,fh\nu,g\mu,gh\nu\in\chvf(X)$. A standard calculation shows
$$[f\nu,gh\mu]-[fh\nu,g\mu]=fg\nu(h)\mu\in\lie(\chvf(X)). $$ Thus an arbitrary element $\sum (f_i g_i)\nu(h)\mu\in \mathrm{span}\lbrace\ker\nu
\cdot\ker\mu\rbrace\cdot\nu(h)\cdot\mu$
with $f_i\in\ker\nu$ and $g_i\in\ker\mu$ is contained in $\lie(\chvf(X))$ which concludes the proof. \end{proof}
 \begin{proof}[Proof of Theorem \ref{criterion}]
  \textit{(1)} Let  $I_i$ be the ideals and $h_i$ the functions of the pairs $(\nu_i,\mu_i)$ and pick any non-trivial $f_i\in I_i\cdot\nu_i(h_i)$ for every $i$.
Since the set of points $p\in X$ where the vector fields $\mu_i[p]$ are a generating set is open and non-empty there is some $q\in X$ where the vector fields
$f_i(q)\mu_i[q]$ are a generating set for $\T_qX$. By Lemma \ref{lem-comp} the module generated by the vector fields $f_i\mu_i$ is contained in the closure of
$\lie(\chvf(X))$ and thus by Proposition \ref{prop-crit} the manifold $X$ has the density property.

\textit{(2)} Process as in \textit{(1)}: Let $I_i$ be the ideals of the pairs $(\nu_i,\mu_i)$ and pick any non-trivial $f_i\in I_i$ for every $i$. Since the set
of points $p\in X$ where the elements $\nu_i[p]\wedge\mu_i[p]$ are an $\omega$-generating set is open and non-empty there is a $q\in X$ where the vector fields
$f_i(q)\cdot(\nu_i[q]\wedge\mu_i[q])$ are an $\omega$-generating set for $\T_qX\wedge\T_qX$. By Lemma \ref{lem-scomp} the module generated by the elements
$f_i\cdot(\nu_i\wedge\mu_i)$ is contained in the closure of
$\lie(\chvfw(X))\wedge\lie(\chvfw(X))$. Thus by Proposition \ref{prop-critw} the manifold $X$ has the volume density property (the first condition of
Proposition \ref{prop-critw} on the cohomology group is trivially fulfilled since $\H^{n-1}(X,\C)=0$ by assumption).
 \end{proof}

We conclude this section by two remarks. These two remarks are just for general information and are not used later in this article.
\begin{remark}
 Clearly Theorem \ref{criterion}(2) still holds if we have that every class of
$\H^{n-1}(X,\C)$ contains
an element of $\Theta(\lie(\chvfw(X)))$ as in Proposition \ref{prop-critw} instead of $\H^{n-1}(X,\C)=0$. Also, note that this condition is equivalent to the 
condition that every class of
$\H^{n-1}(X,\C)$ contains
an element of $\Theta(\chvfw(X))$. Indeed, by Lemma \ref{lem-3.1} all Lie brackets represent the trivial class of $\H^{n-1}(X,\C)$.
\end{remark}

\begin{remark}
 There is another class of compatible pairs. Sometimes a semi-compatible pair $(\nu,\mu)$ is also called compatible if there exists a function $h\in\O_X(X)$
with $\nu(h)\in\ker\nu\setminus 0$ and $\mu(h)\in\ker\mu\setminus 0$. For this version the identity
$[f\nu,gh\mu]-[fh\nu,g\mu]=fg(\nu(h)\mu-\mu(h)\nu)$ implies that there would be a version of Theorem \ref{criterion} where we allow compatible pairs of this
kind such that the vectors $\nu(h)\mu - \mu(h)\nu$ take part in constructing the generating sets.
\end{remark}
\section{Transitivity of the $\aut(X)$- and $\autw(X)$-action}\label{sec-trans}
Let $n\geq k\geq 0$ and $a,b\in\C[z_0,\ldots,z_n]$ such that $\deg_{z_i}(a)\leq2$ and $\deg_{z_i}(b)\leq 1$ for all $i\leq k$. Let
$\bar z =
(z_0,\ldots z_n)$ and $X=\lbrace x^2y=a(\bar z) + xb(\bar z)\rbrace$ with the holomorphic volume form $\omega = \d x/x^2\wedge\d
z_0\wedge\ldots\wedge \d z_n$\footnote{It is a general fact that on every hypersurface $\lbrace P = 0\rbrace\subset\C^N$ there is a natural volume form given 
by $\omega = \left(\frac{\partial P}{\partial x_i}\right)^{-1} \d x_1\wedge\ldots\wedge\widehat{\d x_i}\wedge\ldots\wedge \d x_N$.}. Consider the following 
vector fields on $X$:
\[ v_x^i = \left(\frac{\partial a}{\partial z_i}+x\frac{\partial b}{\partial z_i}\right)\frac{\partial}{\partial y} + x^2\frac{\partial}{\partial z_i}
\quad \text{and} \quad
 v_y^j = \left(\frac{\partial a}{\partial z_j}+x\frac{\partial b}{\partial z_j}\right)\frac{\partial}{\partial x} + \left( 2xy - b(\bar z)\right)
\frac{\partial}{\partial z_j}
\]
for $0\leq i \leq n$ and $0\leq j \leq k$ and moreover let
\[v_z = a(\bar z)x\frac{\partial}{\partial x}-\left(2a(\bar z)y -xyb(\bar z) + b(\bar z)^2\right)\frac{\partial}{\partial y}.
\]

\begin{lemma}\label{lem-complete}
$f(\bar z)v_z\in\chvf(X)$, $f(x,z_0,..\hat{z_i}..,z_n)v_x^i \in \chvfw(X)$ and \linebreak $f(y,z_0,..\hat{z_j}..,z_n)v_y^j \in \chvfw(X)$ for
$f:\C^{n+1}\rightarrow \C$,
$0\leq i \leq n$ and $0\leq
j \leq k$.
\end{lemma}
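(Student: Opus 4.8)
The strategy is to exhibit, for each of the three families of vector fields, an explicit complete holomorphic flow, and then to check the divergence condition where volume-preservation is claimed. The key observation is that each vector field has a large kernel: $v_z$ annihilates each $z_i$, the field $v_x^i$ annihilates $x$ and all $z_\ell$ with $\ell\neq i$, and $v_y^j$ annihilates $y$ and all $z_\ell$ with $\ell\neq j$. Hence multiplying by a function $f$ pulled back from the appropriate coordinate slice does not destroy completeness: if $f\in\ker\nu$ then the flow of $f\nu$ is just the flow of $\nu$ with time rescaled pointwise by $f$, so $f\nu$ is complete as soon as $\nu$ is complete and $f$ lies in its kernel. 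Thus the whole lemma reduces to (i) showing $v_z$, $v_x^i$, $v_y^j$ are themselves complete, and (ii) showing $v_x^i$ and $v_y^j$ have vanishing $\omega$-divergence.

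\emph{Completeness.} First I would treat $v_x^i$. Since it kills $x$ and every $z_\ell$, $\ell\neq i$, along an integral curve these stay constant; writing $c:=\tfrac{\partial a}{\partial z_i}+x\tfrac{\partial b}{\partial z_i}$ (a constant along the flow because $\deg_{z_i}a\le 2$, $\deg_{z_i}b\le 1$ forces $\tfrac{\partial a}{\partial z_i}$ to be independent of $z_i$ and $\tfrac{\partial b}{\partial z_i}$ constant), the system becomes $\dot z_i = x^2$, $\dot y = c$, which is linear with constant coefficients, hence complete; one checks the solution stays on $X$ by differentiating the defining equation $x^2y=a+xb$ along the field (it is tangent to $X$ by construction). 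The same reasoning applies to $v_y^j$: it kills $y$ and all $z_\ell$, $\ell\ne j$, and along the curve $x$ satisfies $\dot x = (\tfrac{\partial a}{\partial z_j}+x\tfrac{\partial b}{\partial z_j})$ with the bracketed term linear in $x$ with constant coefficients (again using the degree bounds for $j\le k$), while $\dot z_j = 2xy-b$; substituting the exponential solution for $x$ gives an explicit complete solution. For $v_z$: it kills every $z_i$, so $a(\bar z)$ and $b(\bar z)$ are constants $\alpha,\beta$ along the flow, and the system reduces to $\dot x = \alpha x$, $\dot y = -(2\alpha y - \beta xy + \beta^2)$; the first is solved by $x(t)=x_0e^{\alpha t}$, and plugging in makes the $y$-equation a linear first-order ODE with known time-dependent coefficients, explicitly integrable, hence complete. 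Tangency to $X$ again follows from the construction of $v_z$.

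\emph{Volume preservation.} For $v_x^i$ I would compute $\operatorname{div}_\omega v_x^i$ directly. Using $\omega = \d x/x^2\wedge \d z_0\wedge\cdots\wedge\d z_n$ and the standard formula for the divergence of a vector field written in these coordinates (the $x$-component of $v_x^i$ is zero and the $z_i$-component is $x^2$, which has zero $\partial_{z_i}$-derivative; the $\partial/\partial y$-component is $c$, but $y$ does not appear among the coordinates defining $\omega$ on this chart, so it contributes nothing), one gets $\operatorname{div}_\omega v_x^i = \partial_{z_i}(x^2) = 0$. For $v_y^j$ the analogous computation uses the chart with coordinates $(y,z_0,\ldots,z_n)$ in place of $(x,\ldots)$, on which $\omega$ has a similar shape (via the alternative expression from the footnote, $\omega = -(\partial P/\partial y)^{-1}\d x\wedge\d z_0\wedge\cdots = x^{-2}\d x\wedge\cdots$, i.e. essentially the same form re-expressed); here the $\partial/\partial x$-component of $v_y^j$ is $c=\tfrac{\partial a}{\partial z_j}+x\tfrac{\partial b}{\partial z_j}$ and the $\partial/\partial z_j$-component is $2xy-b$, and one checks $\partial_x c + \partial_{z_j}(2xy-b)$ together with the contribution of the $x^{-2}$ factor cancels to zero — this is exactly where the degree constraints $j\le k$ enter, since $\partial_{z_j}b$ must be constant and $\partial_{z_j}a$ independent of $z_j$ for the cancellation. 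Finally, multiplying by $f$ from the kernel does not change the divergence: $\operatorname{div}_\omega(f\nu) = f\operatorname{div}_\omega\nu + \nu(f) = 0$ since $\nu(f)=0$ and $\operatorname{div}_\omega\nu=0$.

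\emph{Main obstacle.} The completeness claims are essentially bookkeeping once one exploits the kernels, so the only genuinely delicate point is the volume-preservation computation for $v_y^j$: one must pass to the correct coordinate chart, keep track of how $\omega$ transforms, and verify the cancellation, making sure the degree hypotheses $\deg_{z_j}a\le 2$, $\deg_{z_j}b\le 1$ for $j\le k$ are precisely what is needed. I would carry this out explicitly in the $(y,z_0,\ldots,z_n)$-chart, note that $X\setminus\{x=0\}\to\C^{n+1}$, $(x,\bar z)\mapsto(y,\bar z)$ need not be a global chart but that $v_y^j$ is tangent to $\{x=0\}$ only where the coefficients force it, and conclude that the identity $\d i_{v_y^j}\omega = 0$ holds on a dense open set, hence everywhere by continuity.
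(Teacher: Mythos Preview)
Your overall strategy is sound and matches the paper's: exploit the large kernels so that completeness and the divergence condition need only be checked for the bare fields $v_z$, $v_x^i$, $v_y^j$. However, there is a recurring arithmetical slip that breaks two of your completeness arguments.

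You claim that $\deg_{z_i}a\le 2$ forces $\partial a/\partial z_i$ to be independent of $z_i$. This is false: if $a$ is quadratic in $z_i$ then $\partial a/\partial z_i$ is \emph{linear} in $z_i$, not constant. Consequently your quantity $c=\partial a/\partial z_i+x\,\partial b/\partial z_i$ is in general \emph{not} constant along the flow of $v_x^i$, and your argument for that field collapses. (It also would not apply to $i>k$, where no degree bound is assumed, yet the lemma asserts completeness for all $i\le n$.) The correct observation, which the paper uses, is that $v_x^i$ is locally nilpotent: $x$ and $z_\ell$ ($\ell\ne i$) are frozen, $\dot z_i=x^2$ is constant so $z_i$ is affine in $t$, and then $\dot y$ is a polynomial in $t$, so $y$ is polynomial in $t$ --- complete regardless of degrees. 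The same slip invalidates your treatment of $v_y^j$: the equation $\dot x=\partial a/\partial z_j+x\,\partial b/\partial z_j$ is \emph{not} decoupled from $z_j$, so there is no ``exponential solution for $x$'' to substitute. What is true (and what the paper says) is that the \emph{coupled} system in $(x,z_j)$ is linear with constant coefficients, since both coefficient functions are affine in $(x,z_j)$ once the degree bounds $j\le k$ are in force; linear systems are globally solvable.

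You also misplace the role of the degree hypotheses: they are needed only for completeness of $v_y^j$, not for its volume preservation. The paper makes this transparent by computing $i_{v_y^j}\omega$ directly and recognising it, via the identity $d\bigl((a+xb)/x^2\bigr)=dy$ on $X$, as the closed form $(-1)^j\,dy\wedge dz_0\wedge\cdots\widehat{dz_j}\cdots\wedge dz_n$; no degree assumption enters. Your divergence-in-a-chart plan can be made to work, but the direct interior-product computation is cleaner and avoids the coordinate-change bookkeeping you flagged as the main obstacle.
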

\begin{proof}
 It is easy to see that the vector fields $v_x^i$ are locally nilpotent, so they are in particular complete. The coefficients of $v_y^j$ are linear in $x$ and
$z_j$ for all $0\leq j\leq k$ so the flow equation is a linear differential equation, and thus has a global solution. The vector field $v_z$ is
complete since we may first solve the linear and uncoupled differential equation for $x$. Then the differential equation for $y$ becomes linear and uncoupled 
as well, and thus we have a global solution. It is left to show that the vector fields
$v_x^i$ and $v_y^j$ are volume preserving. A standard calculation shows that
\begin{eqnarray*}
 i_{v_x^i}\omega & = &(-1)^{i+1}\d x\wedge \d z_0 \wedge ..\widehat{\d z_i}..\wedge \d z_n, \\
 i_{v_y^j}\omega &= &\frac{1}{x^2}\left(\frac{\partial a}{\partial z_j}+x\frac{\partial b}{\partial z_j}\right)\d z_0 \wedge \ldots \wedge \d z_n + \\
& & \frac{(-1)^{j+1}}{x^2} ( 2xy -b(\bar z) ) \d x\wedge \d z_0 \wedge..\widehat{\d z_j}..\wedge \d z_n \\
& = & (-1)^j \ \d \left( \frac{a(\bar z) + xb(\bar z)}{x^2} \right) \wedge \d z_0 \wedge ..\widehat{\d z_j}..\wedge \d z_n \\
& = & (-1)^j  \d y \wedge \d z_0 \wedge ..\widehat{\d z_j}..\wedge \d z_n.
\end{eqnarray*}
Thus $L_{v_x^i}\omega = \d i_{v_x^i}\omega = 0$ and $L_{v_y^j}\omega = \d i_{v_y^j}\omega = 0$ which shows that they are volume preserving.

Multiplying with a kernel element doesn't affect these properties.
\end{proof}

\begin{lemma}
 The group $\aut(X)$ acts transitively on $X\setminus \lbrace x=0\rbrace$. Moreover $\autw(X)$ acts transitively on $X\setminus \lbrace x=0\rbrace$ provided
that
condition (B) from the introduction holds.
\end{lemma}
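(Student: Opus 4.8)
The plan is to pass to the chart $X\setminus\{x=0\}$, on which the defining equation determines $y=(a(\bar z)+xb(\bar z))/x^2$, so that the projection $(x,y,\bar z)\mapsto(x,\bar z)$ identifies $X\setminus\{x=0\}$ biholomorphically with $\C^*\times\C^{n+1}$, with coordinates $x\in\C^*$ and $\bar z\in\C^{n+1}$. On functions of $(x,\bar z)$ the complete fields from Lemma \ref{lem-complete} act as derivations in a very simple way: $v_x^i$ acts as $x^2\,\partial/\partial z_i$ (it has no $\partial/\partial x$-component, and functions of $(x,\bar z)$ do not involve $y$); $v_z$ acts as $a(\bar z)\,x\,\partial/\partial x$; and, using $2xy-b=(2a+xb)/x$, the field $v_y^j$ acts as $\bigl(\tfrac{\partial a}{\partial z_j}+x\tfrac{\partial b}{\partial z_j}\bigr)\,\partial/\partial x+\tfrac{2a(\bar z)+xb(\bar z)}{x}\,\partial/\partial z_j$; moreover by Lemma \ref{lem-complete} one has $v_x^i\in\chvfw(X)$ for $0\le i\le n$, $v_y^j\in\chvfw(X)$ for $0\le j\le k$, and $v_z\in\chvf(X)$. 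First I would record \emph{fibre transitivity}: the time-$t$ flow of $v_x^i$ is $z_i\mapsto z_i+tx^2$, fixing $x$ and all other coordinates, so restricted to a fibre $\{x=x_0\}\cong\C^{n+1}$ with $x_0\in\C^*$ it is the translation $z_i\mapsto z_i+tx_0^2$; composing these flows for $i=0,\dots,n$ shows that both $\aut(X)$ and $\autw(X)$ act transitively on each such fibre.

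It then remains to connect different fibres. For $\aut(X)$ this is immediate. We may assume $a\not\equiv0$ (the case $a\equiv0$ forces $X$ to be reducible and is treated separately), so there is $\bar q\in\C^{n+1}$ with $a(\bar q)\ne0$; the complete field $v_z\in\chvf(X)$ then has time-$t$ flow $(x,\bar q)\mapsto(x\,e^{t\,a(\bar q)},\bar q)$ along the slice $\{\bar z=\bar q\}$, and as $t$ ranges over $\C$ the factor $e^{t\,a(\bar q)}$ ranges over all of $\C^*$. Hence for arbitrary points $P=(x_0,\bar z^0)$ and $Q=(x_1,\bar z^1)$ of $X\setminus\{x=0\}$ we first shear $P$ to $(x_0,\bar q)$, then flow along $v_z$ to $(x_1,\bar q)$, then shear to $Q$, which proves transitivity of $\aut(X)$ on $X\setminus\{x=0\}$.

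For $\autw(X)$ the field $v_z$ is no longer available (it is not known to be volume preserving), and this is exactly where condition (B) enters. For $x_0,x_1\in\C^*$ write $x_0\sim x_1$ if some $\phi\in\autw(X)$ maps some point of $\{x=x_0\}$ into $\{x=x_1\}$; using fibre transitivity one checks this is an equivalence relation on $\C^*$, and it suffices to show that every class is open, since then connectedness of $\C^*$ forces a single class, which together with fibre transitivity yields transitivity of $\autw(X)$ on $X\setminus\{x=0\}$. To see that the class of a given $x_0\in\C^*$ is open, apply condition (B) with $c=x_0$: some $j\le k$ has $\tfrac{\partial a}{\partial z_j}+x_0\tfrac{\partial b}{\partial z_j}\not\equiv0$, so we may pick $\bar z^0\in\C^{n+1}$ with $\tfrac{\partial a}{\partial z_j}(\bar z^0)+x_0\tfrac{\partial b}{\partial z_j}(\bar z^0)\ne0$. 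Since $v_y^j\in\chvfw(X)$ is complete, the $x$-coordinate of the point obtained from $(x_0,\bar z^0)$ by flowing for complex time $t$ along $v_y^j$ is a holomorphic function of $t$ whose derivative at $t=0$ equals the nonzero number $\tfrac{\partial a}{\partial z_j}(\bar z^0)+x_0\tfrac{\partial b}{\partial z_j}(\bar z^0)$; by the open mapping theorem its image contains a neighbourhood $V$ of $x_0$ in $\C^*$. Thus $x_0\sim x_1$ for every $x_1\in V$, so the class of $x_0$ is a neighbourhood of $x_0$, as required.

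The only genuinely delicate step is this last one: one should \emph{not} attempt to move $x$ all the way around $\C^*$ with volume-preserving complete fields (which fails in general without (B)), but instead use $v_y^j$ only to perturb $x$ locally, reading off from condition (B) via the choice $c=x_0$ that some such field has nonzero $x$-component, and then invoking connectedness of $\C^*$. The remaining ingredients — the coordinate computation in the chart, fibre transitivity, and the $\aut(X)$ case — are routine.
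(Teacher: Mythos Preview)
Your proof is correct and follows essentially the same approach as the paper: fibre transitivity via the flows of the $v_x^i$, connecting fibres via $v_z$ in the $\aut(X)$ case, and using condition (B) to produce some $v_y^j$ with nonzero $x$-component at each fibre in the $\autw(X)$ case. The paper compresses the last step into the single sentence ``for every $c\neq 0$ there is a point $p\in\{x=c\}$ and a vector field $v_y^j$ such that $v_y^j[p]$ points outwards of the fiber,'' leaving the passage from this local observation to global transitivity implicit; your equivalence-relation/open-classes/connectedness argument makes this passage explicit and is a clean way to write it out, but it is the same underlying idea rather than a genuinely different route.
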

\begin{proof}
 Use the flow of the vector fields $v_x^i$ to get a transitive action on the fibers $\lbrace x=c\neq0\rbrace$ and the fields $v_z$ to connect the fibers. Thus
we see
that $\aut(X)$ acts transitively on $X\setminus \lbrace x=0\rbrace$. If condition (B) holds then we may also use the vector fields $v_y^j$ to connect the
fibers. Indeed, for every $c\neq 0$ there is a point $p\in \lbrace x=c\rbrace$ and a vector field $v_y^j$ such that $v_y^j[p]$ points outwards of the fiber.
Thus in this case $\autw(X)$ acts transitively on $X\setminus \lbrace x=0\rbrace$.
\end{proof}
\begin{lemma} If (A) from the introduction holds then for every point $p\in X\cap \lbrace x=0\rbrace$ there is some $\phi\in\autw(X)$ such that
$\phi(p)\notin\lbrace x=0\rbrace$.
\end{lemma}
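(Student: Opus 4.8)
The plan is to use the volume-preserving vector fields $v_x^i$ and $v_y^j$ from Lemma \ref{lem-complete} to move a point $p=(0,y_0,\bar q)$ off the hyperplane $\{x=0\}$. First I would note that the flow of $v_x^i$ for $0\leq i\leq n$ preserves $\{x=0\}$ (since the coefficient of $\partial/\partial x$ is zero), but it moves in the $z_i$ and $y$ directions; so by combining the flows of $v_x^0,\ldots,v_x^n$ we can move $p$ to any point $(0,y_1,\bar q')$ with $\bar q'$ in the $\aut$-orbit of $\bar q$ inside $\{x=0\}\cong\C^{n+1}$ (with $y$ then forced by the defining equation $0=a(\bar q')$, which must still hold). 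Indeed on $\{x=0\}$ the equation forces $a(\bar z)=0$, so we are constrained to the hypersurface $\{a=0\}\subset\C^{n+1}$; restricted there, $v_x^i$ acts as $\frac{\partial a}{\partial z_i}\partial_y + 0\cdot\partial_{z_i}$, which is useless for moving in $z$. The useful observation is rather the opposite: along $\{x=0\}$ one checks the $\partial/\partial z_i$-component of $v_x^i$ vanishes, so these do not help, and we must instead exploit $v_y^j$.

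The key point is that on $X\cap\{x=0\}$ the vector field $v_y^j$ has $x$-component equal to $\frac{\partial a}{\partial z_j}(\bar q)$ (setting $x=0$ in $\frac{\partial a}{\partial z_j}+x\frac{\partial b}{\partial z_j}$), and $z_j$-component equal to $-b(\bar q)$. So if at our point $p$ we have $\frac{\partial a}{\partial z_j}(\bar q)\neq 0$ for some $j\leq k$, then the flow of $v_y^j$ immediately pushes $p$ off $\{x=0\}$ and we are done. The remaining case is a point $\bar q$ with $\frac{\partial a}{\partial z_j}(\bar q)=0$ for all $j\leq k$; combined with $a(\bar q)=0$ (forced by $x=0$), condition (A) then tells us two things: (i) $b(\bar q)\neq 0$, and (ii) there is some $j\leq k$ for which $\frac{\partial a}{\partial z_j}$ does not vanish identically along the line $\ell=\{z_i=q_i,\ i\neq j\}$. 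The strategy is: use (i) together with the $z_j$-component $-b(\bar q)\neq 0$ of $v_y^j$ to flow \emph{within} $X\cap\{x=0\}$ and thereby change the coordinate $z_j$, moving $\bar q$ along the line $\ell$; by (ii), after a generic such displacement we reach a new point $\bar q'$ on $\ell$ at which $\frac{\partial a}{\partial z_j}(\bar q')\neq 0$, and then a further application of $v_y^j$ (now with nonzero $x$-component) carries us off $\{x=0\}$.

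More carefully, I would analyze the flow of $v_y^j$ starting at $p=(0,y_0,\bar q)$ with $\frac{\partial a}{\partial z_j}(\bar q)=0$ and $b(\bar q)\neq 0$. Writing the flow as $(x(t),y(t),\bar z(t))$, initially $\dot x(0)=\frac{\partial a}{\partial z_j}(\bar q)=0$ but $\dot z_j(0)=2x(0)y(0)-b(\bar q)=-b(\bar q)\neq 0$ and $\dot z_i(0)=0$ for $i\neq j$; so to first order the point stays on $\{x=0\}$ and moves along $\ell$. The flow does not in general remain on $\{x=0\}$ for $t\neq 0$, but that is fine — I only need it to leave $\{x=0\}$. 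If the flow already leaves $\{x=0\}$ for small $t>0$ we are done; if it stays on $\{x=0\}$ for an interval of $t$, then on that interval $\frac{\partial a}{\partial z_j}(\bar z(t))\equiv 0$, and since $\bar z(t)$ traces an arc of $\ell$ with nonvanishing speed, this forces $\frac{\partial a}{\partial z_j}\equiv 0$ along all of $\ell$, contradicting (ii). Hence the flow must leave $\{x=0\}$, giving the desired $\phi\in\autw(X)$ as a composition of time-$t$ maps of the $v_y^j$'s (which lie in $\autw(X)$ by Lemma \ref{lem-complete}).

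The main obstacle I anticipate is the bookkeeping in the last step: one must be sure that the flow of $v_y^j$, while it genuinely is a globally defined volume-preserving automorphism, actually detaches from $\{x=0\}$ rather than lingering there indefinitely, and the clean way to see this is the contradiction-with-(ii) argument above — so the real content is extracting exactly the hypothesis ``$\frac{\partial a}{\partial z_j}$ does not vanish along $\ell$'' from condition (A) and pairing it with ``$b(\bar q)\neq 0$'' to guarantee motion along $\ell$. A secondary subtlety is that $y_0$ is not a free parameter: it is determined (or, if $x=0$ and the equation degenerates, unconstrained) by the defining equation, so one should phrase the whole argument intrinsically on $X$ rather than on the ambient $\C^{n+3}$, but this does not affect the flow computations since $v_y^j$ is tangent to $X$ by construction.
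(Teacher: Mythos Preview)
Your argument is correct and follows essentially the same route as the paper's own proof: split into the case where some $\frac{\partial a}{\partial z_j}(\bar q)\neq 0$ (then $v_y^j$ has nonzero $x$-component and its flow immediately leaves $\{x=0\}$), and the case where all these vanish, in which (A) provides $b(\bar q)\neq 0$ and a $j$ with $\frac{\partial a}{\partial z_j}\not\equiv 0$ on the line $\ell$; then the flow of $v_y^j$, were it to remain in $\{x=0\}$, would force $\frac{\partial a}{\partial z_j}$ to vanish identically on $\ell$, a contradiction. Your write-up is somewhat more explicit about the identity-principle step than the paper's terse phrasing, but the substance is the same.
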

\begin{proof}
By (A) we have that for any point $p=(0,y_0,\bar q)\in
\lbrace
x=0\rbrace\cap X$
at least one of the polynomials $b,\frac{\partial a}{\partial
z_0},\ldots,\frac{\partial a}{\partial z_k}$ does not vanish at $\bar q$. For a non-vanishing $\frac{\partial a}{\partial z_j}$ the vector field $v_y^j$ points
outwards from $\lbrace x=0 \rbrace$ at the point $p$. Thus the flow of $v_y^j$ moves $p$ away from $\lbrace x = 0\rbrace$.
If all polynomials
$\frac{\partial a}{\partial
z_0},\ldots,\frac{\partial a}{\partial z_k}$ vanish at $\bar q=(q_0,\ldots,q_n)$ then $b$ is non-vanishing at $\bar q$ and moreover there is a $j\leq k$ such
that $\frac{\partial a}{\partial
z_j}$ does not vanish along the curve $\lbrace z_i=q_i \text{ for all } i\neq j \rbrace \subset \C^{n+1}$. This means that $v_y^j$ is non-vanishing at $p$.
Assume the orbit of $p$ by the flow of $v_y^j$ is contained in $\lbrace x=0\rbrace$ then the set $\lbrace x=0,y=y_0,z_i=q_i \text{ for all } i\neq j \rbrace
\subset \C^{n+3}$ would contained in $X$ and tangent to $v_y^j$, which is not the case.
\end{proof}
These two lemmas combined give the following proposition using the conditions from the introduction.
\begin{proposition}\label{prop-trans} Assume that (A) holds. Then $\aut(X)$ acts transitively on $X$. Assume that additionally (B) holds.
Then $\autw(X)$
acts transitively on $X$. 
\end{proposition}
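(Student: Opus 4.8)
The plan is to deduce the proposition by simply assembling the two lemmas immediately above it, with Lemma \ref{lem-complete} used only in the background to guarantee that the flows in question are honest (and, where required, volume-preserving) automorphisms.

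First I would treat the case in which only (A) is assumed. Take arbitrary $p,q\in X$. If $p$ lies on the slice $\{x=0\}$, the last lemma produces $\phi\in\autw(X)\subseteq\aut(X)$ with $\phi(p)\notin\{x=0\}$; replacing $p$ by $\phi(p)$, and $q$ similarly by an image off the slice, I may assume $p,q\in X\setminus\{x=0\}$. By the first of the two lemmas, $\aut(X)$ acts transitively on $X\setminus\{x=0\}$, so some automorphism carries $p$ to $q$. Composing back with the two ``escape'' automorphisms yields an element of $\aut(X)$ sending the original $p$ to the original $q$, and since $p,q$ were arbitrary, $\aut(X)$ acts transitively on $X$. (If $X\cap\{x=0\}=\emptyset$ the escape step is vacuous.)

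Next I would rerun this argument under the extra hypothesis (B), keeping track of volume preservation. The escape automorphisms supplied by the last lemma already lie in $\autw(X)$, and by the first of the two lemmas hypothesis (B) upgrades the transitivity on $X\setminus\{x=0\}$ from $\aut(X)$ to $\autw(X)$. Hence the whole chain of compositions stays inside $\autw(X)$, and $\autw(X)$ acts transitively on $X$.

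From the point of view of proving the proposition from the lemmas there is no serious obstacle: it is pure bookkeeping. The real content, and the only place where (A) and (B) are genuinely used, lies in those lemmas. Condition (B) is what forces some $v_y^j$ to point out of a fibre $\{x=c\}$ with $c\neq0$, so that the nonzero fibres can be connected by volume-preserving flows; condition (A) is what ensures, at each base point $\bar q$ of $\{x=0\}\cap X$, either a nonvanishing $\partial a/\partial z_j$, or $b(\bar q)\neq 0$ together with a $v_y^j$ not everywhere tangent to the curve $\{z_i=q_i,\ i\neq j\}$ — which is exactly what makes the flow of $v_y^j$ carry a point of $\{x=0\}$ off that slice. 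This escape-from-$\{x=0\}$ step (the previous lemma) is the part I would expect to need the most care, and it is the reason condition (A) is stated in its particular form.
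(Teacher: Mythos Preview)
Your proposal is correct and matches the paper's approach exactly: the paper's entire proof is the single sentence ``These two lemmas combined give the following proposition using the conditions from the introduction,'' and you have spelled out precisely that combination. Your additional commentary on where (A) and (B) are actually consumed is accurate and simply makes explicit what the paper leaves implicit.
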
 

\begin{corollary}\label{cor-koras}
 The volume preserving automorphisms act transitively on the Koras-Russell cubic $C=\lbrace x^2y + x + z_0^2 + z_1^3=0\rbrace$.
\end{corollary}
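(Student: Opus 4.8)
The plan is to recognize $C$ as a member of the family treated in this section and then to verify hypotheses (A) and (B), so that Proposition \ref{prop-trans} applies directly. First I would put the defining equation $x^2y + x + z_0^2 + z_1^3 = 0$ into the normal form $x^2y = a(\bar z) + x b(\bar z)$ with $n = 1$, $\bar z = (z_0,z_1)$, by setting $a(\bar z) = -z_0^2 - z_1^3$ and $b(\bar z) = -1$. No reordering of the $z_i$ is needed, since $z_0$ will turn out to be the good variable.

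Next I would check condition (A) with the choice $k = 0$. The degree bounds $\deg_{z_0}(a) = 2 \le 2$ and $\deg_{z_0}(b) = 0 \le 1$ are immediate. The common zeroes of $a$ and $\frac{\partial a}{\partial z_0} = -2z_0$ are precisely the points with $z_0 = 0$ and $z_1^3 = 0$, i.e. the single point $\bar q = (0,0)$, and there $b(\bar q) = -1 \ne 0$. Finally, taking $j = 0 \le k$, the polynomial $\frac{\partial a}{\partial z_0} = -2z_0$ is not identically zero along the curve $\{z_1 = q_1\} = \{z_1 = 0\}$. Hence (A) holds. For (B), again with $k = 0$: since $\frac{\partial b}{\partial z_0} = 0$, the polynomial $\frac{\partial a}{\partial z_0} + c\frac{\partial b}{\partial z_0} = -2z_0$ is not identically zero for any $c \in \C^*$, so (B) holds as well.

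With (A) and (B) verified, Proposition \ref{prop-trans} immediately yields that $\autw(C)$ acts transitively on $C$, which is the assertion. I do not expect a genuine obstacle here; the only point requiring a moment's care is that one cannot take $k$ larger than $0$, because $\deg_{z_1}(a) = 3 > 2$. But $k = 0$ already suffices: the single fibre $\{x = 0\}$ that is not reached by the first lemma of this section sits over $\bar q = (0,0)$, and there the complete volume-preserving field $v_y^0$ is nonvanishing and transverse to $\{x = 0\}$ (this is exactly the second case in the proof of the lemma preceding Proposition \ref{prop-trans}), so its flow moves such a point off $\{x = 0\}$.
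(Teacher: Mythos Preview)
Your proof is correct and follows exactly the paper's approach: identify $a = -z_0^2 - z_1^3$, $b = -1$, verify conditions (A) and (B) with $k=0$, and invoke Proposition \ref{prop-trans}. One small slip in your closing explanatory remark: at points over $\bar q = (0,0)$ the field $v_y^0$ is \emph{not} transverse to $\{x=0\}$ (its $\partial/\partial x$-coefficient $\frac{\partial a}{\partial z_0}+x\frac{\partial b}{\partial z_0}$ vanishes there); it is merely nonvanishing, and the second case of the lemma argues that its orbit cannot stay inside $\{x=0\}$---but this does not affect the formal argument, which is already complete once (A) and (B) are checked.
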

\begin{proof}
 We have $\frac{\partial a}{\partial z_0} = -2z_0$ and $b(z_0,z_1)=-1$. Thus it is easy to see that (A) and (B) hold, and thus the vector fields $v_x^0,v_x^1$
and $v_y^0$ induce a transitive action on $C$.
\end{proof}

\begin{remark} 
 The transitivity of the action by automorphisms a priori need not to be achieved by the vector fields $v_x^i,v_y^i,v_z$ only (which is equivalent to condition
(A) from the introduction). There could be further automorphisms. For example depending on $a$ and $b$ there could be automorphisms of the form $(x,y,\bar
z)\mapsto(x,\gamma y,\lambda(\bar z))$ where $\lambda$ is an automorphisms of $\C^{n+1}$ with the property that $a(\lambda(\bar z))+xb(\lambda(\bar
z))=\gamma(a(\bar z) + xb(\bar z))$ for some $\gamma\in\C^*$. A similar statement holds for transitivity by volume preserving automorphisms.
\end{remark}

\section{The Main Theorem for $n>0$}\label{sec-pos}
Let $n>0$, $n\geq k\geq 0$ and $a,b\in\C[z_0,\ldots,z_n]$ such that $\deg_{z_i}(a)\leq2$ and $\deg_{z_i}(b)\leq 1$ for all $i\leq k$. Moreover, assume that
not both of
$\deg_{z_0}(a)$ and $\deg_{z_0}(b)$ are equal to
zero. Let
$\bar z=(z_0,\ldots z_n)$ and $X=\lbrace x^2y=a(\bar z) + xb(\bar z)\rbrace$ with the holomorphic volume form $\omega = \d x/x^2\wedge\d
z_0\wedge\ldots\wedge \d z_n$.

Consider, again, the following vector fields on $X$:
\[ v_x^i = \left(\frac{\partial a}{\partial z_i}+x\frac{\partial b}{\partial z_i}\right)\frac{\partial}{\partial y} + x^2\frac{\partial}{\partial z_i}
\quad \text{and} \quad
 v_y^j = \left(\frac{\partial a}{\partial z_j}+x\frac{\partial b}{\partial z_j}\right)\frac{\partial}{\partial x} + \left( 2xy - b(\bar z)\right)
\frac{\partial}{\partial z_j}
\]
for $0\leq i \leq n$ and $0\leq j \leq k$.

\begin{lemma}\label{lem-pairs}
 Let $0\leq i\leq n$ and $0\leq j \leq k$. Then $(v_x^i,v_y^j)$ are compatible pairs for $i\neq j$. 
\end{lemma}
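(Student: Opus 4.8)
The plan is to check, by direct computation with the explicit vector fields, the two conditions defining a compatible pair (Definitions \ref{def-scomp} and \ref{def-comp}), using crucially the hypothesis $i\neq j$.

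First I would record the kernels. Since $v_x^i$ only differentiates in $y$ and $z_i$, the Leibniz rule gives $v_x^i(x)=0$ and $v_x^i(z_\ell)=0$ for $\ell\neq i$, hence $\ker v_x^i$ contains every polynomial in $x$ and in the variables $z_\ell$ with $\ell\neq i$ (restricted to $X$); symmetrically, since $v_y^j$ only differentiates in $x$ and $z_j$, the kernel $\ker v_y^j$ contains every polynomial in $y$ and in the $z_\ell$ with $\ell\neq j$. Completeness of $v_x^i$ and $v_y^j$ is already provided by Lemma \ref{lem-complete}.

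For semi-compatibility I would show that the linear span of $\ker v_x^i\cdot\ker v_y^j$ contains every polynomial function on $X$. This is where $i\neq j$ enters: a monomial $x^a y^c\prod_\ell z_\ell^{e_\ell}$ can be written as the product of $x^a z_j^{e_j}\prod_{\ell\neq i,j}z_\ell^{e_\ell}$ --- a polynomial in $x$ and the $z_\ell$ with $\ell\neq i$, hence an element of $\ker v_x^i$ --- with $y^c z_i^{e_i}\in\ker v_y^j$; this placement is legitimate precisely because $j\neq i$ (so $z_j$ may sit in the first factor) and $i\neq j$ (so $z_i$ may sit in the second). Since such monomials span the coordinate ring of $X\subset\C^{n+3}$, and since a closed complex submanifold of affine space has its polynomial functions dense in $\O_X(X)$ (extend a holomorphic function to $\C^{n+3}$ by Cartan's Theorem B, then apply Oka--Weil approximation on the polynomial hull of a compact set), the closure of the span of $\ker v_x^i\cdot\ker v_y^j$ is all of $\O_X(X)$. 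In particular it contains the nontrivial ideal $I=\O_X(X)$, so $(v_x^i,v_y^j)$ is semi-compatible.

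Finally I would upgrade to a compatible pair using the function $h=z_i$: one computes $v_x^i(z_i)=x^2$ and $v_x^i(x^2)=2x\,v_x^i(x)=0$, so $v_x^i(h)=x^2\in\ker v_x^i\setminus 0$, while $v_y^j(z_i)=(2xy-b(\bar z))\,\delta_{ij}=0$ since $i\neq j$, so $h\in\ker v_y^j$. I do not anticipate a genuine obstacle; the argument is essentially bookkeeping with the explicit fields, and the only step needing more than a line is the density of polynomial functions in $\O_X(X)$, which is a standard consequence of Cartan's Theorem B together with Oka--Weil approximation.
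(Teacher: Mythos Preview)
Your proof is correct and follows essentially the same route as the paper: identify that $\ker v_x^i$ contains the functions of $x,z_0,\ldots,\widehat{z_i},\ldots,z_n$ and $\ker v_y^j$ contains the functions of $y,z_0,\ldots,\widehat{z_j},\ldots,z_n$, conclude that the closure of $\mathrm{span}(\ker v_x^i\cdot\ker v_y^j)$ is all of $\O_X(X)$, and then use $h=z_i$ for the compatibility condition. The only difference is that you spell out the monomial factorization and the density of polynomials explicitly, whereas the paper states these steps more tersely.
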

First we show that $(v_x^i,v_y^j)$ are semi-compatible pairs. Indeed the kernel of $v_x^i$ is contained in the functions depending on $x,z_0,..\hat z_i ..,z_n$ 
and
the kernel of $v_y^j$ is contained in the functions depending on $y,z_0,..\hat z_j ..,z_n$ thus the closure of $\mathrm{span}\lbrace\ker v_x^i\cdot \ker
v_y^j\rbrace$ is equal to $\O_X(X)$ and
in particular contains an ideal.

For $(v_x^i,v_y^j)$ being a compatible pair we need a function $h\in\ker v_y^j$ such that $v_x^i(h)\in\ker v_x^i\setminus 0$. The function $h=z_i$ does the
job.
\begin{lemma}\label{lem-genset}
 For a generic point $p\in X$ the vector $v_y^0[p]$ is a generating set for $\mathrm{T}_pX$ and the set 
$v_x^n[p]\wedge v_y^0[p]$ is a $\omega$-generating set for $\mathrm{T}_pX\wedge \mathrm{T}_pX$. Note that the first statement also true for $n=0$.
\end{lemma}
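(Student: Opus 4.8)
The plan is to verify both assertions at a point $p\in X$ with $x(p)=x_0\neq 0$ and $\bigl((\partial a/\partial z_0)+x\,(\partial b/\partial z_0)\bigr)(p)\neq 0$; such points form a dense open subset of $X$ because the polynomial $(\partial a/\partial z_0)+x\,(\partial b/\partial z_0)$ is not identically zero, which is precisely where the hypothesis that $\deg_{z_0}a$ and $\deg_{z_0}b$ do not both vanish is used. On $\{x\neq 0\}$ the hypersurface $X$ is the graph $y=(a(\bar z)+xb(\bar z))/x^2$, so $(x,z_0,\dots,z_n)$ are coordinates there and $\T_pX=\C e_x\oplus V$ with $e_x=\partial/\partial x$ and $V=\mathrm{span}(e_0,\dots,e_n)$, $e_i=\partial/\partial z_i$; a short computation gives $v_x^i[p]=x_0^2\,e_i$ and $v_y^0[p]=\alpha\,e_x+\beta\,e_0$ with $\alpha=\bigl((\partial a/\partial z_0)+x(\partial b/\partial z_0)\bigr)(p)\neq 0$. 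The tool I use throughout is the elementary fact that if $\theta\in\chvfw(X)$ and $\ell\in\O_X(X)$ satisfy $\ell(p)=0$, $\theta(\ell)(p)=0$ and $\ell\theta\in\chvfw(X)$, then $\exp(t\,\ell\theta)\in\autw(X)_p$ for every $t$ and its differential at $p$ equals $\mathrm{id}+tN$, where $N=\theta[p]\otimes d\ell|_p$ is a rank-$\le 1$ nilpotent. Since all these flows fix $p$, composing them shows that every element of the subgroup $G\le GL(\T_pX)$ generated by the corresponding transvections is realised as $d\phi|_p$ for some $\phi\in\autw(X)_p$; hence the $\autw(X)_p$-orbit of any bivector $\eta$ contains $\Lambda^2A(\eta)$ for all $A\in G$, and likewise the $\aut(X)_p$-orbit of any tangent vector $v$ contains $A(v)$ for all $A\in G$.

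For the first assertion I would run this with $\theta=v_x^i$ and $\ell=x-x_0$, for $i=0,\dots,n$: by Lemma \ref{lem-complete} the field $(x-x_0)v_x^i$ is complete and volume preserving, it vanishes at $p$, and $v_x^i(x)\equiv 0$, so the differential at $p$ sends $v_y^0[p]$ to $v_y^0[p]+t\alpha\,v_x^i[p]$. As $\alpha\neq 0$ and $x_0\neq 0$, the vectors $v_y^0[p],v_x^0[p],\dots,v_x^n[p]$ form a basis of $\T_pX$, so the $n+2$ orbit vectors $v_y^0[p]$ and $v_y^0[p]+v_x^i[p]$ ($i=0,\dots,n$) are linearly independent, i.e.\ a basis; thus $\{v_y^0[p]\}$ generates $\T_pX$. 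This argument does not use $n>0$, which accounts for the final remark of the lemma.

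For the second assertion (where $n>0$) I take $G$ to be generated by the transvections from $\theta=v_x^i$ with $\ell=x-x_0$ and with $\ell=z_j-z_j(p)$ for $j\neq i$; in the basis $(e_x,e_0,\dots,e_n)$ these are $\mathrm{id}+tx_0^2E_{ix}$ and $\mathrm{id}+tx_0^2E_{ij}$, so $G$ contains all translations $T_w\colon e_x\mapsto e_x+w$ (with $e_i\mapsto e_i$) for $w\in V$, as well as a copy of $SL(V)$ acting on $V$ and fixing $e_x$. Put $\eta_0:=v_x^n[p]\wedge v_y^0[p]$ and $W:=\mathrm{span}\{\Lambda^2A(\eta_0):A\in G\}$, a $G$-submodule of $\T_pX\wedge\T_pX$ contained in the $\autw(X)_p$-orbit of $\eta_0$. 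From $\Lambda^2T_w(\eta_0)=\eta_0+\alpha x_0^2\,e_n\wedge w$ I get $e_n\wedge V\subset W$, and then, splitting off the $e_n\wedge e_0$ part of $\eta_0$ (legitimate since $\alpha x_0\neq 0$), also $e_n\wedge e_x\in W$, so $e_n\wedge\T_pX\subset W$. Invariance under $SL(V)$ then forces $W$ to contain the $SL(V)$-module generated by $e_n\wedge e_x$, which is all of $e_x\wedge V$ since $SL(V)$ is transitive on $V\setminus\{0\}$, and the $SL(V)$-module generated by $e_n\wedge e_0$, which is all of $\Lambda^2V$ since that representation is irreducible for $\dim V=n+1\ge 2$. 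As $\T_pX\wedge\T_pX=(e_x\wedge V)\oplus\Lambda^2V$, this gives $W=\T_pX\wedge\T_pX$; a subset of the orbit spanning the whole space contains a basis, so $v_x^n[p]\wedge v_y^0[p]$ is an $\omega$-generating set.

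The routine points are the coordinate formulas for $v_x^i[p]$, $v_y^0[p]$ and $\Lambda^2T_w(\eta_0)$, and checking via Lemma \ref{lem-complete} that the fields $(x-x_0)v_x^i$ and $(z_j-z_j(p))v_x^i$ ($j\neq i$) are complete, volume preserving, and vanish at $p$. The step I expect to carry the real weight is the passage from the individual flows to the generated linear group $G\supseteq V\rtimes SL(V)$ together with the representation-theoretic computation that its $\Lambda^2$-orbit through the decomposable bivector $\eta_0$ already spans $\T_pX\wedge\T_pX$; note in particular that neither the fields $v_y^j$ with $j\ge 1$ nor conditions (A), (B) enter the proof.
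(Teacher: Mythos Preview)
Your proof is correct and follows the same outline as the paper: choose a generic point with $x_0\neq 0$ and $\alpha:=(\partial a/\partial z_0+x\,\partial b/\partial z_0)(p)\neq 0$, and use the stabilising flows of $(x-x_0)v_x^i$ (and, for the bivector statement, also $(z_j-q_j)v_x^i$) to produce enough transvections on $\T_pX$. For the first assertion your argument is identical to the paper's.

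For the second assertion the execution differs slightly. The paper uses only the transvections coming from $(x-x_0)v_x^i$ and $(z_n-q_n)v_x^i$ for $i\le n-1$, and checks directly that the resulting orbit elements furnish bases of $v_x^i[p]\wedge\T_pX$ for each $i$. You instead throw in \emph{all} $(z_j-q_j)v_x^i$ with $j\neq i$, recognise the generated linear group as containing $V\rtimes SL(V)$, and then invoke irreducibility of $\Lambda^2V$ under $SL(V)$ together with transitivity of $SL(V)$ on $V\setminus\{0\}$ to conclude that the span $W$ of the orbit is all of $\T_pX\wedge\T_pX$. Your route is a bit more structural and scales cleanly with $n$; the paper's is more hands-on and uses fewer vector fields. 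Both arrive at the same point.

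One wording slip: you write that $W$ is ``contained in the $\autw(X)_p$-orbit of $\eta_0$'', which is of course backwards --- the $G$-orbit is contained in the $\autw(X)_p$-orbit, and $W$ is its linear span. Your concluding sentence (``a subset of the orbit spanning the whole space contains a basis'') shows you have the logic right; just correct that phrase.
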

\begin{proof}
Let $p=(x_0, y_0 ,\bar q)$ where $\bar q=(q_0,\ldots,q_n)$ be such that $x_0\neq 0$ and $\frac{\partial a(\bar q)}{\partial z_0}+x_0\frac{\partial
b(\bar q)}{\partial z_0}\neq0$. 

For a complete vector field $\nu\in\chvf(X)$ and kernel element $f\in\ker\nu$ with $f(p)=0$ we get an induced action (by the time one flow map of $f\nu$) on
$\T_pX$ given by $v\mapsto v+v(f)\nu[p]$. Let $\nu_i = v_x^i$ and $f_i = x - x_0$ for $0\leq i\leq n$. Thus the orbit of $v_y^0[p]$ under the
$\autw(X)_p$-action contains the vectors $v_y^0[p] + \left(\frac{\partial a(\bar q)}{\partial z_0}+x_0\frac{\partial
b(\bar q)}{\partial z_0}\right)v_x^i[p]$. Therefore the orbit contains a basis for $\T_pX$.

Since $f\in\ker v_x^n$ we have that $v_x^n[p]\mapsto v_x^n[p]$ under the actions given by $(x-x_0)v_x^i$. So in particular, similarly we have that 
the orbit of
$v_x^n[p]\wedge v_y^0[p]$ contains a basis for $v_x^n[p]\wedge \T_pX$. Considering now the actions given by the vector fields $(z_n - q_n)v_x^i$ for $i\leq
n-1$. We get the actions $v_x^n[p]\mapsto v_x^n[p] + x_0^2v_x^i[p]$, and thus we see that the orbit of the $\autw(X)_p$-action contains a basis for 
$v_x^i[p]\wedge
\T_pX$ for all $i\leq n$. Together they build then together a basis for $\T_pX\wedge \T_pX$.
\end{proof}
\begin{proof}[Proof of the Main Theorem for $n>0$]
By Lemma \ref{lem-pairs} there exists a point $p\in X$ and compatible pairs
$(\nu_i,\mu_i)$ such that the vectors $\mu_i[p]$ are a generating set for $\T_pX$ and the elements $\nu_i[p]\wedge\mu_i[p]$ are a $\omega$-generating set for
$\T_pX\wedge\T_pX$ thus by Theorem \ref{criterion} the claim is proven. Proposition \ref{prop-trans} proves the ``in particular'' part.
\end{proof}

\begin{remark}
We never used that $a$ and $b$ are polynomials. In fact, the Main Theorem also holds if $a$ and $b$ are polynomial in $z_0$ and analytic in $z_1,\ldots,z_n$. 
\end{remark}

\begin{remark}
 The condition that $\H^{n+1}(X,\C)=0$ in the Main Theorem could be omitted in the case when every class of $\H^{n+1}(X,\C)$ contains
an element of $\Theta(\lie(\chvfw(X))$ as in Proposition \ref{prop-critw}. However this cannot be achieved by the complete vector fields $f\cdot v_x^i$ and
$g\cdot v_y^j$ since they are all mapped to the zero class by $\Theta$ (see the calculation in the proof of Lemma \ref{lem-complete}). So the existence of
other complete volume preserving vector fields would be required.
\end{remark}

\begin{remark} If $a(\bar z)$ is reduced (i.e. has connected fibers) and $\lbrace a(\bar z) = 0\rbrace\subset\C^{n+1}$ is smooth then the condition 
$\H^{n+1}(X,\C)$ is actually equivalent
to the condition that $\tilde\H^{n-1}(\lbrace a(\bar z) = 0\rbrace,\C) = 0$. Indeed, if $Y=\lbrace uv = a(\bar w)\rbrace$ then the map $X\rightarrow Y$ given 
by $(x,y,\bar z)\mapsto (x, xy -b(\bar z),\bar z)$ is an
affine modification in the sense of \cite{za}. Theorem 3.1 of \cite{za} shows that $\H^*(X,\C)=\H^*(Y,\C)$. Moreover, Proposition 4.1 of \cite{za} states
that we have
$$\tilde\H^*(Y,\C) = \tilde\H^{*-2}\Big(\lbrace a(\bar z)=0\rbrace,\C\Big)$$ which proofs the statement of the remark.
\end{remark}

\section{The Main Theorem for $n=0$}\label{sec-zero}
Let $a,b\in\C$, and let $X_{a,b}=\lbrace x^2y = z^2 - b + ax \rbrace$. Note that $X_{a,b}$ is smooth if and only if $a$ and $b$ are not both equal to zero. 
Also, the conditions (A) and (B) from the introduction hold automatically if $X_{a,b}$ is smooth. Therefore the volume preserving automorphisms act 
transitively on
$X_{a,b}$. The following proposition is mostly taken from \cite{po}, and among others it shows that $X_{a,b}$ is algebraically isomorphic to $X_{1,0}$, 
$X_{0,1}$ or
$X_{1,1}$. Moreover it shows that $X_{0,1}$ and $X_{1,1}$ are biholomorphic.

\begin{proposition}\label{standard}
\begin{enumerate}[(a)]
\item  Let $a\in\C$ and $b\in\C^*$ then
\begin{enumerate}[(i)]
 \item $X_{a,1}\algiso X_{a,b}$ and $X_{b,a}\algiso X_{1,a}$,
 \item $X_{0,1}\holiso X_{a,1}$,
 \end{enumerate}
where $\algiso$ means isomorphic as algebraic surfaces and $\holiso$ isomorphic as complex manifolds.
\item  Let $X = \lbrace x^2y=p(x,z)\rbrace$ be a smooth hypersurface with $p\in\C[x,z]$ and $\deg_zp(0,z)\leq 2$ then
\begin{enumerate}[(i)]
 \item if $\deg_zp(0,z)= 0$ then $X\algiso\C^*\times \C$,
 \item if $\deg_zp(0,z)= 1$ then $X\algiso\C^2$,
\item if $p(0,z)$ has a zero with multiplicity 2 then $X\algiso X_{1,0}$,
\item if $p(0,z)$ has two different zeroes then there is a unique $a\in\lbrace 0,1\rbrace$ such that $X\algiso X_{a,1}$.
\end{enumerate}
\end{enumerate}
\end{proposition}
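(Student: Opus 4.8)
The plan is to prove the two parts essentially by exhibiting explicit algebraic (or biholomorphic) changes of coordinates, so the whole proposition reduces to routine but careful computation. I would organize part (a) first, since the surfaces $X_{a,b}$ are the normal forms appearing in part (b).

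For part (a)(i), the isomorphism $X_{a,1}\algiso X_{a,b}$ (with $b\in\C^*$) should come from a scaling $(x,y,z)\mapsto(\lambda x,\mu y,\nu z)$; plugging into $x^2y=z^2-b+ax$ and matching coefficients with $x^2y=z^2-1+ax$ forces $\nu^2=b$, $\lambda^2\mu=b$, $\lambda\mu=a/a$ when $a\neq0$ (and is unconstrained in the $a=0$ case), which one solves over $\C$; here the non-vanishing of $b$ is what makes the substitution invertible. The second isomorphism $X_{b,a}\algiso X_{1,a}$ for $a\in\C^*$ is of the same flavour, now using that $a\neq0$ to rescale the linear term. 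For part (a)(ii), the biholomorphism $X_{0,1}\holiso X_{a,1}$ for $a\in\C$ is the genuinely transcendental point: over $X_{0,1}=\{x^2y=z^2-1\}$, the map should adjust $z$ by a holomorphic function of $x$ — something like $(x,y,z)\mapsto(x,\,y+\text{(correction)},\,z+\tfrac{a}{2}x\cdot e^{h(x)})$ — chosen so that the cross term $ax$ is absorbed; one needs an everywhere-nonvanishing holomorphic factor (an exponential) rather than a polynomial, which is exactly why this is only a biholomorphism and not an algebraic isomorphism. I would write down the candidate map, verify it sends the defining equation to the defining equation, and check holomorphic invertibility by exhibiting the inverse in closed form.

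For part (b) I would start from $X=\{x^2y=p(x,z)\}$ smooth with $\deg_z p(0,z)\le 2$ and split into the four cases by the form of the polynomial $q(z):=p(0,z)$. If $\deg_z q=0$ then $q$ is a nonzero constant (smoothness along $x=0$), so after dividing we are looking at $x^2y=c+x(\dots)$; absorbing the constant into $y$ and using that $x$ is then invertible on $X$ gives $X\algiso\C^*_x\times\C$. If $\deg_z q=1$, a shift in $z$ and in $y$ removes the lower-order data and one identifies $X$ with the graph-type hypersurface $x^2y=z$, hence with $\C^2$ (coordinates $x,z$). If $q$ has a double zero, translate $z$ so that $q(z)=\gamma z^2$, rescale to $\gamma=1$, and absorb the remaining $x$-dependent terms of $p$ to land on $X_{1,0}=\{x^2y=z^2+x\}$ (here one must check that the coefficient of $x$ in $p$ is nonzero — otherwise $X$ would be $\{x^2y=z^2\}$, which is singular, contradicting smoothness — so the normalization to $X_{1,0}$ rather than something degenerate is forced). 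If $q$ has two distinct zeroes, translate and rescale $z$ to make $q(z)=z^2-1$, then reduce the $x$-dependence of $p$ modulo the relation to the form $x^2y=z^2-1+ax$ for some $a\in\C$, i.e. to some $X_{a,1}$; finally invoke part (a)(i)--(ii) — or a direct rescaling argument — to see that $a$ can be normalized to lie in $\{0,1\}$, and argue uniqueness of this representative (the surfaces $X_{0,1}$ and $X_{1,1}$ being non-isomorphic \emph{as algebraic surfaces} even though biholomorphic, which is presumably recorded elsewhere, e.g.\ in \cite{po}).

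The main obstacle I anticipate is part (a)(ii): getting the biholomorphism $X_{0,1}\holiso X_{a,1}$ exactly right, since it requires leaving the algebraic category and choosing the non-vanishing holomorphic correction term so that all the cross terms cancel, and then writing down a genuine two-sided holomorphic inverse. Everything in part (b) is bookkeeping with affine changes of coordinates in $z$ and $y$ plus repeated use of the fact that smoothness forbids the degenerate limits; the only subtle points there are making sure each normalization step is invertible over $\C[x,z]$ (or $\C[x^{\pm1},z]$ in case (i)) and that the case division by the multiplicity pattern of $p(0,z)$ is exhaustive given $\deg_z p(0,z)\le 2$ and smoothness.
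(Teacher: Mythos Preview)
Your plan for (a)(i) via explicit rescalings is correct and more hands-on than the paper, which simply invokes Theorem~9 of \cite{po}. Likewise your case split for (b) is right in outline; the paper handles this by citing Theorem~5 of \cite{po}, which produces the normal form $x^2y=s(z)+xt(z)$ with $s(z)=p(0,z)$ and $\deg t\le\deg s-2$, after which the four cases are just linear changes in $z$. The reduction of an arbitrary $p(x,z)$ to that normal form is Poloni's algorithm; it is constructive, but it is more than the ``bookkeeping'' you suggest, and carrying it out would amount to reproving that theorem. The uniqueness in (b)(iv) is also taken from Theorem~9 of \cite{po}, as you guessed.

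The genuine gap is in (a)(ii): an \emph{additive} correction $z\mapsto z+g(x)$ cannot absorb the term $ax$. On $X_{0,1}$ one has $z^2-1=x^2y$, so
\[
(z+g(x))^2-1+ax \;=\; x^2y + 2z\,g(x)+g(x)^2+ax,
\]
and for the right-hand side to be of the form $x^2Y$ with $Y$ holomorphic one needs the $z$-coefficient $2g(x)$ to be $O(x^2)$; but then $g(x)^2+ax=ax+O(x^4)$ is not $O(x^2)$ unless $a=0$. The correct move is a \emph{multiplicative} scaling of $z$: the paper writes down
\[
(x,y,z)\;\longmapsto\;\Bigl(x,\ e^{-ax}y+\frac{e^{-ax}+ax-1}{x^2},\ e^{-\frac{a}{2}x}z\Bigr),
\]
and one checks directly that $X^2Y-Z^2+1-aX=e^{-ax}\bigl(x^2y-z^2+1\bigr)$, so this sends $X_{0,1}$ biholomorphically onto $X_{a,1}$ (the middle entry is entire because $e^{-ax}+ax-1$ vanishes to order~$2$ at $x=0$). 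Your intuition that an exponential is needed was right, but it enters as a dilation of $z$, not a translation.
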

\begin{proof}
Theorem 9 from \cite{po} gives the isomorphisms in (a)(i). The biholomorphic map in (a)(ii) is given by
$$ (x,y,z)\mapsto \left(x,e^{-ax}y +\frac{e^{-ax}+ax -1}{x^2},e^{-\frac{a}{2}x}z\right).$$
Theorem 5 from \cite{po} states that $X$ is algebraically isomorphic to $x^2y = s(z) + xt(z)$
for some $s,t\in\C[z]$ with $\deg t \leq \deg s -2$ Following the given algorithm we see that $s(z)=p(0,z)$ which is then, after a linear change in the
$z$-coordinate, given by (i)~$1$, (ii)~$z$, (iii)~$z^2$ or (iv)~$z^2-1$. The isomorphisms in (b) are then easily found and the uniqueness in (b)(iv) follows 
from
Theorem 9 from \cite{po}.
\end{proof}

Despite of Proposition \ref{standard}(a) we will start working on $X_{a,b}$ for general $a,b\in\C$. It turns out that this is more convenient for most 
arguments.

\begin{remark}\label{rem-standard}
 Every function $f\in\C[X_{a,b}]$ can be written uniquely as
\[
 f(x,y,z)=\sum_{i=1}^\infty x^ia_i(z) + \sum_{i=1}^\infty xy^ib_i(z) + \sum_{i=1}^\infty y^ic_i(z) + d(z),
\]
indeed replace every $x^2y$ by $z^2 - b + ax$. Alternatively $f$ can be written uniquely as
\[ 
 f(x,y,z)= f_1(x,y) + zf_2(x,y),
\]
indeed replace every $z^2$ by $x^2y + b -ax$.
\end{remark}

We will use a tool in order to proof the (volume) density property, namely the algebraic (volume) density property. Note that the algebraic (volume)
density property implies the (volume) density property, see \cite{kaku-volume}.

\begin{definition}\label{def-adp}
 Let $X$ be an affine algebraic manifold. If the Lie algebra \linebreak $\lie(\cavf(X))$ generated by complete algebraic vector fields
$\cavf(X)$ on $X$ is equal to the Lie algebra of all algebraic vector fields $\avf(X)$ on $X$ then $X$ has the algebraic density property.

Let $X$ be an affine algebraic manifold equipped with an algebraic volume form $\omega$. If the Lie algebra $\lie(\cavfw(X))$ generated by complete volume
preserving algebraic vector fields $\cavfw(X)$ on $X$ is equal to the Lie algebra of all volume preserving algebraic vector fields $\hvfw(X)$ on $X$ then $X$
has the algebraic volume density property.
\end{definition}

\subsection{The volume density property}
Let $a,b\in\C$ such that not both equal to zero. For proving the volume density property for $X_{a,b}= \lbrace x^2y=z^2 - b + ax\rbrace$ with respect to
$\omega = \d x/x^2\wedge \d z$ we will need the following two vector
fields:
\[
 v_x=2z\frac{\partial}{\partial y} + x^2\frac{\partial}{\partial z}, \quad
 v_y=2z\frac{\partial}{\partial x} + (2xy-a)\frac{\partial}{\partial z}.
\]
Lemma \ref{lem-complete} translates into: 
\begin{lemma}\label{complete}
 $x^kv_x\in\cavfw(X_{a,b})$ and $y^kv_y\in\cavfw(X_{a,b})$ for $k\geq0$.
\end{lemma}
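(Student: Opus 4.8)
The plan is to recognize Lemma \ref{complete} as the $n=0$ case of Lemma \ref{lem-complete}, so that almost nothing new has to be proved; the two things to watch are a notational clash and the fact that here we want membership in $\cavfw$ rather than $\chvfw$. Writing the surface $X_{a,b}=\{x^{2}y=z^{2}-b+ax\}$ in the form $\{x^{2}y=A(\bar z)+xB(\bar z)\}$ of Sections \ref{sec-trans}--\ref{sec-pos}, with $\bar z=z_{0}$ and where I temporarily write $A,B$ for the defining polynomials to avoid clashing with the constants $a,b$, one reads off $A(z_{0})=z_{0}^{2}-b$ (degree $2$ in $z_{0}$) and $B(z_{0})\equiv a$ (degree $0$), hence $\partial A/\partial z_{0}=2z_{0}$ and $\partial B/\partial z_{0}=0$. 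Substituting these into the definitions of $v_{x}^{0}$ and $v_{y}^{0}$ gives exactly
\[
 v_{x}^{0}=2z\frac{\partial}{\partial y}+x^{2}\frac{\partial}{\partial z}=v_{x},\qquad
 v_{y}^{0}=2z\frac{\partial}{\partial x}+(2xy-a)\frac{\partial}{\partial z}=v_{y},
\]
so $v_{x},v_{y}$ are precisely the vector fields to which Lemma \ref{lem-complete} applies, and $\omega=\d x/x^{2}\wedge\d z$ is the $n=0$ specialization of the volume form there.

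First I would apply Lemma \ref{lem-complete} verbatim. Since $n=0$, deleting $z_{i}$ (resp.\ $z_{j}$) from the list $z_{0},\dots,z_{n}$ leaves no variable, so the admissible functions $f$ there are functions of $x$ alone in the $v_{x}^{0}$ case and functions of $y$ alone in the $v_{y}^{0}$ case; in particular $x^{k}\in\ker v_{x}$ and $y^{k}\in\ker v_{y}$. Choosing $f=x^{k}$ and $f=y^{k}$ gives $x^{k}v_{x}\in\chvfw(X_{a,b})$ and $y^{k}v_{y}\in\chvfw(X_{a,b})$. Here I use, as recorded at the end of the proof of Lemma \ref{lem-complete}, that multiplying a complete volume-preserving vector field by a kernel element preserves completeness (the factor is constant along the flow, so it only reparametrizes time) and preserves the vanishing of $L_{\nu}\omega$.

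It remains to upgrade $\chvfw$ to $\cavfw$, the only point not literally contained in Lemma \ref{lem-complete}. The vector fields $x^{k}v_{x}$ and $y^{k}v_{y}$ are manifestly algebraic on the affine manifold $X_{a,b}$, so I only need that they are algebraically complete, and this is exactly what the proof of Lemma \ref{lem-complete} establishes: $v_{x}$ is a locally nilpotent derivation of $\C[X_{a,b}]$ (indeed $v_{x}(x)=0$, $v_{x}(z)=x^{2}$, $v_{x}(y)=2z$, so $v_x^{3}$ kills the generators), hence its flow, and that of $x^{k}v_{x}$ since $x^{k}\in\ker v_{x}$, is defined for all $t\in\C$; and the flow of $v_{y}$, hence of $y^{k}v_{y}$ since $y^{k}\in\ker v_{y}$ merely rescales time, solves a linear system in $(x,z)$ with $y$ a first integral and is therefore defined for all $t\in\C$. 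Combined with volume preservation this yields $x^{k}v_{x},y^{k}v_{y}\in\cavfw(X_{a,b})$ for all $k\ge0$. I anticipate no genuine obstacle here: all the substance lies in Lemma \ref{lem-complete}, and this statement is only its transcription to the surface $X_{a,b}$ in the algebraic category.
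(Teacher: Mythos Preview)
Your proposal is correct and follows exactly the paper's approach: the paper gives no separate proof, it simply states that ``Lemma \ref{lem-complete} translates into'' this lemma, i.e.\ it recognizes $v_x=v_x^0$, $v_y=v_y^0$ for the $n=0$ surface with defining data $A(z)=z^2-b$, $B(z)=a$, precisely as you do. Your only excess is in the last paragraph: the upgrade from $\chvfw$ to $\cavfw$ does not require reproving completeness---once Lemma \ref{lem-complete} gives $x^kv_x,\,y^kv_y\in\chvfw(X_{a,b})$, the conclusion follows immediately from the observation that these vector fields are algebraic, since $\cavfw$ consists of algebraic vector fields that are complete and volume preserving.
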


\begin{lemma}\label{correspondance}
 Let $v\in\avfw(X_{a,b})$. Then the 1-form $i_v \omega$ is exact and $i_v \omega = \d f$ defines a bijection between algebraic volume preserving vector fields
and
algebraic functions modulo constants. 

The functions corresponding to $x^kv_x$ and $y^kv_y$ are given by the equations
\[(k+1)i_{(x^kv_x)}\omega=-\d x^{k+1} \text{ and } (k+1)i_{(y^kv_k)}\omega =\d y^{k+1}.\]
\end{lemma}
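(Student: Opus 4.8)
To prove Lemma~\ref{correspondance} I would argue in two parts, following the general isomorphism framework set up in Section~\ref{sec-crit}. For the first claim I note that $\dim X_{a,b}=2$, so $\sC_{n-1}(X_{a,b})=\sC_1(X_{a,b})$ and $\sC_{n-2}(X_{a,b})=\sC_0(X_{a,b})=\O_{X_{a,b}}(X_{a,b})$. A volume preserving vector field $v$ satisfies $\d i_v\omega=0$, so $i_v\omega$ is a closed $1$-form. To see it is exact I want to invoke (the algebraic analogue of) $\H^1(X_{a,b},\C)=0$: since $X_{a,b}$ is smooth, by Proposition~\ref{standard} it is algebraically isomorphic to $\C^*\times\C$, $\C^2$, $X_{1,0}$, or one of the $X_{a,1}$, and in each of these cases the relevant degree-one (algebraic de Rham) cohomology vanishes --- for $\C^2$ and $X_{1,0}$ and $X_{a,1}$ because they are contractible (indeed $X_{a,b}$ is always diffeomorphic to $\R^4$ when smooth), and one checks the $\C^*\times\C$ case does not actually occur here since $p(0,z)=z^2-b$ has degree $2$ in $z$. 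Hence every closed algebraic $1$-form on $X_{a,b}$ is exact, so $i_v\omega=\d f$ for some $f\in\C[X_{a,b}]$.

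**Bijectivity.** Next, the map $v\mapsto i_v\omega$ is the restriction to $\avfw(X_{a,b})$ of the isomorphism $\Phi\colon \avf(X_{a,b})\xrightarrow{\sim}\sC_1(X_{a,b})$ (the algebraic version of the $\Phi$ from Section~\ref{sec-crit}, which is an isomorphism because $\omega$ is nowhere vanishing), and under $\Phi$ the subspace $\avfw(X_{a,b})$ of volume preserving fields corresponds exactly to $\Z_1(X_{a,b})$, the closed $1$-forms. By the previous paragraph $\Z_1(X_{a,b})=\B_1(X_{a,b})$, and $f\mapsto \d f$ identifies $\B_1(X_{a,b})$ with $\C[X_{a,b}]/\C$ (the kernel of $\d$ consists of the locally constant, hence constant, functions, $X_{a,b}$ being connected). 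Composing, $v\mapsto f$ (with $i_v\omega=\d f$) is a bijection $\avfw(X_{a,b})\to\C[X_{a,b}]/\C$, which is the asserted correspondence.

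**The explicit functions.** For the last part I would simply compute. From the translation of Lemma~\ref{lem-complete} into dimension one in the paper's notation, namely $i_{v_x}\omega$ and $i_{v_y}\omega$, the calculation in the proof of Lemma~\ref{lem-complete} specializes (with $n=0$, the single $z$-variable, no $z_i$ omitted and $\omega=\d x/x^2\wedge\d z$) to $i_{v_x}\omega = -\d x$ and $i_{v_y}\omega=\d y$; one then checks $i_{x^kv_x}\omega = x^k\,i_{v_x}\omega = -x^k\,\d x = -\tfrac{1}{k+1}\d x^{k+1}$ and similarly $i_{y^kv_y}\omega = y^k\,\d y = \tfrac{1}{k+1}\d y^{k+1}$, which is exactly the claimed pair of identities after multiplying by $k+1$. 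This is the routine part: $i_{fv}\omega = f\,i_v\omega$ for a function $f$, so everything reduces to the two base cases $k=0$, which are direct from the vector fields $v_x,v_y$ and the explicit volume form.

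**Main obstacle.** The only genuinely non-formal step is the exactness of $i_v\omega$, i.e. the vanishing of the first algebraic de~Rham cohomology of $X_{a,b}$; the cleanest route is to use Proposition~\ref{standard} to reduce to $\C^2$, $X_{1,0}$, or $X_{a,1}$ and cite (or observe via the affine-modification description of the last remark of Section~\ref{sec-pos}, applied with $a(\bar z)=z^2-b$) that these have trivial reduced cohomology in degrees $\le 2$. Everything else --- identifying $\avfw$ with closed forms under $\Phi$, identifying $\B_1$ with functions mod constants, and the two explicit interior-product computations --- is bookkeeping already contained in the material preceding the statement.
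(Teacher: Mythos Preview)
Your overall strategy matches the paper exactly: use the isomorphism $\Theta$ from Section~\ref{sec-crit} to identify $\avfw(X_{a,b})$ with closed $1$-forms, invoke $\H^1(X_{a,b},\C)=0$ to get exactness, identify exact $1$-forms with functions modulo constants, and then perform the two explicit interior-product computations (which you do correctly, and in the same way as the paper).

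The only point of divergence is how $\H^1(X_{a,b},\C)=0$ is established, and here your argument has a soft spot. You assert that the smooth $X_{a,b}$ are diffeomorphic to $\R^4$; this is stronger than what is needed and you give no proof. Your fallback via the last remark of Section~\ref{sec-pos} also does not literally apply when $b=0$, since that remark assumes $a(\bar z)$ is reduced and $\{a(\bar z)=0\}$ is smooth, whereas $z^2$ is neither. The paper instead works directly with the affine-modification description of $X_{a,b}$ over $\C^2$ (divisor $2\cdot\{x=0\}$, center $(x^2,z^2-b+ax)$) and appeals to \cite{za}: for $b\neq 0$ Proposition~3.1 there gives simple connectivity, while for $b=0$ Theorem~3.1 gives $\H^1(X_{a,0},\C)=\H^1(\C^2,\C)=0$. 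This is both more direct and avoids the unproved contractibility claim. Everything else in your write-up is fine and coincides with the paper's argument.
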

\begin{proof}
 The correspondence is given by the isomorphism $\Theta$ the map $D$ in Section \ref{sec-crit} using the fact that $\H^1(X_{a,b},\C) = 0$. The same
correspondence was
also used in \cite{le}. The triviality
of $\H^1(X_{a,b},\C)$ follows from the fact that $X_{a,b}$ is an affine modification of $\C^2$ along the divisor $2\cdot\lbrace x=0\rbrace$ with center at the
ideal
$(x^2,z^2 - b + ax)$ using the notation from \cite{za}. If $b\neq 0$ then Proposition 3.1 from \cite{za} shows that $X_{a,b}$ is simply connected (since $\C^2$
is simply connected). If $b = 0$ then Theorem 3.1 from \cite{za} shows in a similar way that $\H^1(X_{a,0},\C) = \H^1(\C^2,\C)$, and thus is trivial.
For the two identities we make the
calculations:
$$ (k+1)i_{(x^kv_x)}\omega= (k+1)x^k i_{v_x}\omega=-(k+1)x^k\d x = -\d x^{k+1},$$
$$ (k+1)i_{(y^kv_y)}\omega= (k+1)y^k i_{v_y}\omega= (k+1)y^k \left( \frac{2z}{x^2}\d z - \frac{2xy -a}{x^2}\d x\right) = $$ $$= (k+1)y^k\d
\frac{z^2 - b + ax}{x^2}=(k+1)y^k\d y = \d y^{k+1}.$$
\end{proof}
Recall that for a vector
field $\nu$ and a regular function $f$ we denote by $\nu(f)$ the regular function which is obtained by applying $\nu$ as a derivation.
\begin{lemma}\label{bracket}
Let $v_1,v_2\in\avfw(X_{a,b})$ and $i_{v_1} \omega =\d f$ then $i_{[v_2,v_1]}\omega=\d v_2(f)$. In particular, if $f$ corresponds to a vector field in
$\lie(\cavfw(X_{a,b}))$
then
$x^kv_x(f)$ and $y^kv_y(f)$ correspond to a vector field in $\lie(\cavfw(X_{a,b}))$.  
\end{lemma}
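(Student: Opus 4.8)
The plan is to prove Lemma \ref{bracket} directly from Lemma \ref{lem-3.1} combined with the defining properties of $\Theta$ and $D$ established in Section \ref{sec-crit}. First I would recall that for $v_1, v_2 \in \avfw(X_{a,b})$ we have, by Lemma \ref{lem-3.1} (which holds identically in the algebraic setting), the identity $i_{[v_2,v_1]}\omega = \d\, i_{v_2} i_{v_1}\omega$. Since $i_{v_1}\omega = \d f$ by hypothesis, I would observe that $i_{v_2} i_{v_1}\omega = i_{v_2}(\d f) = v_2(f)$, because contracting a vector field into a $1$-form that happens to be the differential of a function $f$ yields exactly the function $v_2(f)$ (this is just the definition of $v_2$ acting as a derivation). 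Substituting this back gives $i_{[v_2,v_1]}\omega = \d\, v_2(f)$, which is the first assertion. It should be stressed that $i_{[v_2,v_1]}\omega$ is then automatically exact, consistent with the bijection from Lemma \ref{correspondance}, so $v_2(f)$ is indeed (the representative modulo constants of) the function corresponding to $[v_2,v_1]$.

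For the ``in particular'' part, I would argue as follows. Suppose $f$ corresponds to a vector field $v_1 \in \lie(\cavfw(X_{a,b}))$, i.e. $i_{v_1}\omega = \d f$ with $v_1$ lying in the Lie algebra generated by complete volume preserving algebraic vector fields. By Lemma \ref{complete}, both $x^k v_x$ and $y^k v_y$ belong to $\cavfw(X_{a,b})$, hence in particular to $\lie(\cavfw(X_{a,b}))$. Applying the first part of the lemma with $v_2 = x^k v_x$ gives $i_{[x^k v_x,\, v_1]}\omega = \d\bigl((x^k v_x)(f)\bigr)$, and the bracket $[x^k v_x, v_1]$ lies in $\lie(\cavfw(X_{a,b}))$ since that space is a Lie algebra and both arguments lie in it. Thus $(x^k v_x)(f)$ is the function corresponding to a vector field in $\lie(\cavfw(X_{a,b}))$. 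The same reasoning with $v_2 = y^k v_y$ gives the claim for $(y^k v_y)(f)$.

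I do not expect a serious obstacle here; the lemma is essentially a formal consequence of the Cartan-calculus identity in Lemma \ref{lem-3.1} together with the structural facts already set up. The one small point requiring a moment of care is the interpretation of $i_{v_2} i_{v_1}\omega$ when $i_{v_1}\omega = \d f$: one must invoke, rather than re-derive, that the isomorphism $\Psi$ sending $v_2 \wedge v_1 \mapsto i_{v_2} i_{v_1}\omega$ and the outer differential $D$ interact so that $D(\Psi(v_2\wedge v_1)) = i_{[v_2,v_1]}\omega$; this is exactly what Lemma \ref{lem-3.1} records. A second minor point is that one should note $v_2(f)$ is well-defined only up to the ambient ambiguity in $f$ (namely additive constants), but since $\d$ kills constants this causes no trouble in the statement $i_{[v_2,v_1]}\omega = \d\, v_2(f)$. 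With those remarks in place the proof is a two-line computation followed by the Lie-algebra closure observation.
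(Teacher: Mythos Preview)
Your proposal is correct and follows essentially the same route as the paper. The paper's own proof simply cites Lemma~3.2 of \cite{le} for the identity $i_{[v_2,v_1]}\omega = \d\, v_2(f)$; your argument derives it self-containedly from Lemma~\ref{lem-3.1} together with the observation $i_{v_2}(\d f)=v_2(f)$, and your treatment of the ``in particular'' part via Lemma~\ref{complete} and Lie-algebra closure is exactly what is intended.
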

\begin{proof}
 The identity $i_{[v_2,v_1]}\omega=\d v_2(f)$ is shown in Lemma 3.2 in \cite{le}.
\end{proof}

\begin{lemma} Let $i,j,k\geq 0$. Then
\begin{eqnarray}\label{4} v_x(y^{j+1}) &=& 2(j+1)y^j z,\\
\label{5} v_x(y^{j+1}z^{k+1})&=&y^jz^k(2(j+1)z^2+(k+1)(z^2 - b + ax)),\\
\label{1} y^jv_y(z^{k+1})&=&(k+1)y^jz^k(2xy-a),\\
\label{2} v_y(x^{i+1}) &=& 2(i+1)x^i z,\\
\label{3} v_y(x^{i+1}z^{k+1})&=& x^iz^k\left( 2(i+1)z^2 + (k+1)(2z^2 - 2b +ax)\right). \end{eqnarray}
\end{lemma}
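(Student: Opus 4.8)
The identities \eqref{4}--\eqref{3} are bookkeeping: each follows from applying $v_x=2z\partial_y+x^2\partial_z$ or $v_y=2z\partial_x+(2xy-a)\partial_z$ as a derivation, using the Leibniz rule, and then reducing the result to one of the normal forms of Remark \ref{rem-standard} by means of the defining equation $x^2y=z^2-b+ax$ of $X_{a,b}$. The plan is therefore to record the elementary building blocks $v_x(z^{k+1})=(k+1)x^2z^k$, $v_x(y^{j+1})=2(j+1)y^jz$, $v_y(x^{i+1})=2(i+1)x^iz$ and $v_y(z^{k+1})=(k+1)(2xy-a)z^k$, and then assemble the products by Leibniz.

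First I would prove \eqref{4} and \eqref{2} directly, since no reduction is needed: in each case only the ``obvious'' term of the vector field acts nontrivially on the monomial. For \eqref{1} the computation $y^jv_y(z^{k+1})=y^j(2xy-a)\partial_z(z^{k+1})=(k+1)y^jz^k(2xy-a)$ is again immediate and requires no substitution. The two genuinely two-term computations are \eqref{5} and \eqref{3}. For \eqref{5} I would write $v_x(y^{j+1}z^{k+1})=2(j+1)y^jz^{k+2}+(k+1)x^2y^{j+1}z^k$ and then substitute $x^2y^{j+1}=y^j\cdot(x^2y)=y^j(z^2-b+ax)$ to collect the factor $y^jz^k$. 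For \eqref{3} I would write $v_y(x^{i+1}z^{k+1})=2(i+1)x^iz^{k+2}+(k+1)x^{i+1}(2xy-a)z^k$ and then use $x^{i+1}(2xy-a)=2x^i(x^2y)-ax^{i+1}=2x^i(z^2-b+ax)-ax^{i+1}=x^i(2z^2-2b+ax)$ to pull out $x^iz^k$, which yields the stated right-hand side.

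\textbf{Main difficulty.} There is essentially no obstacle here; the only point requiring a small amount of care is that all of these are identities in the coordinate ring $\C[X_{a,b}]$, not in a free polynomial ring, so the substitutions $x^2y=z^2-b+ax$ are legitimate rewritings and the resulting expressions are exactly those appearing in the forms $f_1(x,y)+zf_2(x,y)$ of Remark \ref{rem-standard}. Once the building blocks above are in hand, each of the five lines is a one-line verification, so I would simply present the computation.
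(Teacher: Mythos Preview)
Your proposal is correct and is exactly the ``straight forward calculations'' the paper alludes to; you have simply spelled out the Leibniz-rule computations and the two substitutions $x^2y^{j+1}=y^j(z^2-b+ax)$ and $x^{i+1}(2xy-a)=x^i(2z^2-2b+ax)$ that the paper leaves implicit. There is nothing to add.
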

\begin{proof}
 The lemma is proven by straight forward calculations.
\end{proof}

\begin{proposition}\label{svdp}
Smooth surfaces $X_{a,b}=\lbrace x^2y=z^2 - b + ax\rbrace$ have the algebraic volume density property with respect to $\omega=\mathrm{d}x/x^2\wedge\mathrm{d}z$.
\end{proposition}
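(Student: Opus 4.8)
The plan is to transfer the whole question to the level of functions. By Lemma \ref{correspondance} the contraction $v\mapsto f$ determined by $i_v\omega=\d f$ is a linear bijection between $\avfw(X_{a,b})$ and $\C[X_{a,b}]/\C$, and under it $\lie(\cavfw(X_{a,b}))$ corresponds to a linear subspace $V\subseteq\C[X_{a,b}]/\C$; thus proving the algebraic volume density property amounts to proving $V=\C[X_{a,b}]/\C$. Two facts are immediately at hand. First, by Lemma \ref{complete} the fields $x^kv_x$ and $y^kv_y$ are complete and volume preserving, and the explicit formulas in Lemma \ref{correspondance} show they correspond, up to a nonzero scalar, to the functions $x^{k+1}$ and $y^{k+1}$; hence $x^i\in V$ for all $i\geq 1$ and $y^j\in V$ for all $j\geq 1$. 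Second, by Lemma \ref{bracket} (the $k=0$ case) the subspace $V$ is stable under the two derivations $v_x$ and $v_y$. So it remains a purely combinatorial task: show that the $\C$-span of the orbit of the powers of $x$ and of $y$ under repeated application of $v_x$ and $v_y$ is all of $\C[X_{a,b}]/\C$.

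I would carry this out by induction on weight, using the monomial basis $\{x^iy^j\}_{i,j\geq 0}\cup\{zx^iy^j\}_{i,j\geq 0}$ of $\C[X_{a,b}]$ furnished by the second normal form in Remark \ref{rem-standard}, and the weight on $\C[x,y,z]$ assigning weight one to each of $x,y,z$. The claim to prove by induction on $N\geq 0$ is that $V$ contains every basis monomial of weight at most $N$ other than the constant $1$; the case $N=0$ is vacuous. For the step from $N$ to $N+1$, one first produces the weight-$(N+1)$ monomials of $\C[x,y]$: the pure powers $x^{N+1}$ and $y^{N+1}$ are already in $V$, while for $i,j\geq 1$ one applies $v_y$ to $zx^{i-1}y^{j-1}$, which has weight $N$ and so lies in $V$ by the inductive hypothesis. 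Using $v_y(z)=2xy-a$, $v_y(x)=2z$, $v_y(y)=0$ together with the relation $z^2=x^2y+b-ax$ valid on $X_{a,b}$ (this is the same computation as in identities (\ref{4})--(\ref{3}), only in a different bookkeeping), $v_y(zx^{i-1}y^{j-1})$ equals $2i\,x^iy^j$ plus a combination of monomials of weight at most $N-1$, all of which lie in $V$ by the inductive hypothesis; hence $x^iy^j\in V$. Now that every weight-$(N+1)$ monomial of $\C[x,y]$ is in $V$, one obtains each remaining weight-$(N+1)$ basis monomial $zx^iy^j$ (which necessarily has $i+j=N$) as $\frac{1}{2(j+1)}\,v_x(x^iy^{j+1})$, because $v_x(x^iy^{j+1})=2(j+1)\,x^iy^jz$ while $x^iy^{j+1}$ is a weight-$(N+1)$ monomial of $\C[x,y]$. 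This closes the induction, so $V=\C[X_{a,b}]/\C$ and the algebraic volume density property follows.

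The substance of the argument lies in keeping the induction honest rather than in any single identity. The point to verify is that in each expansion produced by $v_x$ or $v_y$ the coefficient of the wanted monomial is a nonzero constant and that every other monomial appearing — precisely the terms carrying the structure constants $a$ and $b$, which arise from substituting $z^2=x^2y+b-ax$ — has strictly smaller weight, hence is already in $V$ by the inductive hypothesis; the one place requiring a little attention is the handling of the boundary cases $i=0$, $j=0$ and small $N$. Everything else is already in place: the completeness of the fields, the triviality of $\H^1(X_{a,b},\C)$ underlying Lemma \ref{correspondance}, and the stability of $V$ under $v_x$ and $v_y$. I note, finally, that once $V$ is seeded with all powers $x^i$ and $y^j$, the induction needs only the bare derivations $v_x$ and $v_y$ — that is, only the $k=0$ case of Lemma \ref{bracket} — to generate the rest.
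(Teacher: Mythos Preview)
Your argument is correct and takes a genuinely different route from the paper. Both proofs translate the problem to showing that the function space $V$ corresponding to $\lie(\cavfw(X_{a,b}))$ is all of $\C[X_{a,b}]/\C$, seed $V$ with the powers $x^i$ and $y^j$, and then propagate via Lemma~\ref{bracket}. The difference lies in the choice of monomial basis and induction scheme. The paper works with the \emph{first} normal form of Remark~\ref{rem-standard} (monomials $x^iz^k$, $xy^jz^k$, $y^jz^k$), inducts on the $z$-exponent $k$, and in the course of showing $xy^{j+1}z^k\in V$ applies the full family of derivations $y^jv_y$. You instead use the \emph{second} normal form (monomials $x^iy^j$ and $zx^iy^j$), induct on total weight, and need only the two bare derivations $v_x$ and $v_y$; the relation $z^2=x^2y+b-ax$ guarantees that the remainder terms produced by $v_y(zx^{i-1}y^{j-1})$ drop in weight, which is exactly what makes the induction close. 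Your version is a little more economical---it shows that $\lie(\cavfw(X_{a,b}))$ is already generated by the four fields $v_x,\,xv_x,\,v_y,\,yv_y$---while the paper's version stays closer to the explicit identities~(\ref{4})--(\ref{3}) and makes each step visible as a formula. The boundary cases you flag ($i=1$, $j=1$, small $N$) are indeed harmless: for $i=1$ the $z^2$-term in $v_y(zx^{i-1}y^{j-1})$ does not appear, and any weight-zero remainder is absorbed by working modulo constants.
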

\begin{proof}
Let $L$ be the set of function that corresponds to the Lie algebra of complete vector fields. By Lemma \ref{correspondance} we already have $x^i\in L$ and
$y^i\in L$ for $i\geq0$. We need to show that all functions on $X$ (modulo constants) are contained in $L$. It is
enough to show that (a) $x^iz^{k+1} \in L$, (b) $xy^{j+1}z^k\in L$ and (c) $y^{j+1}z^{k+1}\in L$ for all $i,j,k\geq 0$. 

\textit{First we show (a) $x^iz^{k+1}\in L$:} The statement (a) is also true for $k=-1$ by Lemma \ref{correspondance}. Lemma \ref{bracket} shows that
$v_y(x^{i+1})\in L$. Therefore by (\ref{2}) we get $2(i+1)x^iz\in L$ and thus $x^iz\in L$ for $i\geq 0$ which is the statement for $k=0$. Let us assume that
the statement is true for $k-1$ and for $k$. Then, by Lemma \ref{bracket} we have $v_y(x^{i+1}z^k)\in L$. By the induction assumption and (\ref{3}) we have also
$x^{i}z^{k+1}\in L$ which concludes the proof of (a) $x^iz^{k+1}\in L$ inductively for all $i,k\geq 0$.

\textit{The next step is to show (b) $xy^{j+1}z^k\in L$ and (c) $y^{j+1} z^{k+1}$ for $k=0$:} Note that (c) holds also for $k=-1$ by Lemma
\ref{correspondance}. By Lemma \ref{bracket} we have $v_x(y^{j+2})\in L$, and thus by (\ref{4}) $y^{j+1}z\in L$ which proofs statement (c) for $j\geq 0$ and
$k=0$. By the same lemma we have $y^jv_y(z)\in L$. Thus by (\ref{1}) and (c) we have $xy^{j+1}\in L$ proving statement (b) for $j\geq0$ and $k=0$.

\textit{The last step is to show (b) $xy^{j+1}z^k\in L$ and (c) $y^{j+1} z^{k+1}$ for arbitrary $k$:} Let us assume that (b) and (c) hold for $k$ and
moreover (c) holds for $k-1$. By Lemma \ref{bracket} and the induction assumption we have $v_x(y^{j+1}z^{k+1})\in L$ for all $j\geq0$. Thus by the induction
assumption and (\ref{5}) we also have $y^jz^{k+2}\in L$ for all $j\geq0$ which is statement (c) for $k+1$. Similarly, we have $y^jv_y(z^{k+2})\in L$, and thus 
by (\ref{1}) and the induction assumption we get $xy^{j+1}z^{k+1}\in L$ for all $j\geq0$. This is statement (b) for $k+1$. Thus by induction over $k$ the
statements (b) and (c) are shown.

\end{proof}
\begin{theorem}
 Let $X=\lbrace x^2y=p(x,z)\rbrace$ with $p\in\C[x,z]$ and $\deg(p(0,z))\leq 0$ be a smooth surface, then $X$ has the algebraic volume density property for the
volume form $\d x / x^2 \wedge dz$.
\end{theorem}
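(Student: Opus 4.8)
The plan is to reduce $(X,\omega)$ to the pair $(\C^*\times\C,\d w\wedge\d v)$ and then establish the algebraic volume density property for that pair directly, in the spirit of Proposition \ref{svdp} (the direct method of that proof does not adapt verbatim: for $p$ not linear in $x$ the analogue of the field $v_y$ need not be complete). Unwinding the hypothesis, $\deg_z p(0,z)\le 0$ means $p(0,z)=c$ for a constant $c\in\C$, so writing $p(x,z)=c+x\tilde p(x,z)$ with $\tilde p\in\C[x,z]$ the equation of $X$ becomes $x\bigl(xy-\tilde p(x,z)\bigr)=c$. If $c=0$ then $x$ divides $x^2y-p$, so $X\supseteq\{x=0\}$ and $X$ is neither reduced nor smooth (equivalently $\d x/x^2\wedge\d z$ is not a volume form on $X$); hence $c\in\C^*$. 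Then $x$ is a unit in $\C[X]$ with inverse $c^{-1}\bigl(xy-\tilde p(x,z)\bigr)$, so $\C[X]=\C[x,x^{-1},z]$ and $(x,y,z)\mapsto(x,z)$ is an algebraic isomorphism $X\xrightarrow{\sim}\C^*\times\C$ (this is Proposition \ref{standard}(b)(i)). Moreover $\omega=\d x/x^2\wedge\d z=\d(-x^{-1})\wedge\d z$, so in the coordinates $(w,v):=(-x^{-1},z)$ on $\C^*\times\C$ we get exactly $\omega=\d w\wedge\d v$, the restriction of the standard volume form of $\C^2$; in particular the isomorphism is volume preserving, and it suffices to treat $\C^*\times\C$ with $\Omega=\d w\wedge\d v$.

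For that I would use, as in Section \ref{sec-crit} and Lemma \ref{correspondance}, the isomorphism $\nu\mapsto i_\nu\Omega$ from volume preserving fields to closed $1$-forms, now keeping track of $\H^1(\C^*\times\C,\C)=\C\cdot[\d w/w]$: since every closed algebraic $1$-form is uniquely $\d f+\lambda\,\d w/w$ with $f\in\C[w,w^{-1},v]$ and $\lambda\in\C$, one has $\avfw(\C^*\times\C)=\{\nu_f:f\in\C[w,w^{-1},v]\}\oplus\C\cdot\sigma$, where $\nu_f=(\partial_v f)\partial_w-(\partial_w f)\partial_v$ is determined by $i_{\nu_f}\Omega=\d f$ and $\sigma=-w^{-1}\partial_v$ satisfies $i_\sigma\Omega=\d w/w$. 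The class of $\H^1$ is handled immediately: $\sigma$ is complete (flow $(w,v)\mapsto(w,v-t/w)$) and volume preserving, so $\sigma\in\lie(\cavfw(\C^*\times\C))$. For the Hamiltonian part, $\nu_{w^m}$ is a scalar multiple of the complete shear $w^{m-1}\partial_v$ for every $m\in\mathbb Z$, and $\nu_{(wv)^\ell}=\ell\,(wv)^{\ell-1}(w\partial_w-v\partial_v)$ is complete because $w\partial_w-v\partial_v$ is complete and volume preserving and $wv$ lies in its kernel. Their Lie brackets are, up to sign, the Hamiltonian fields of the Poisson brackets (via Lemma \ref{lem-3.1}, exactly as in Lemma \ref{bracket}), and $\{w^m,(wv)^\ell\}=m\ell\,w^{m+\ell-1}v^{\ell-1}$; letting $m$ range over $\mathbb Z\setminus\{0\}$ and $\ell$ over $\{1,2,\dots\}$, and adding back the $\nu_{(wv)^{\ell-1}}$ already in hand, one obtains $\nu_{w^av^b}\in\lie(\cavfw(\C^*\times\C))$ for all $a\in\mathbb Z$ and $b\ge 0$, hence $\nu_f\in\lie(\cavfw(\C^*\times\C))$ for every $f$ by linearity. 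Together with $\sigma$ this gives $\lie(\cavfw(\C^*\times\C))=\avfw(\C^*\times\C)$, as required.

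The main thing to be careful about is the volume-form bookkeeping in the reduction: one must verify that $\d x/x^2\wedge\d z$ really becomes the standard form $\d w\wedge\d v$ and not a Haar-type form $(\d w/w)\wedge\d v$ — it is precisely the exponent $2$ in $1/x^2$ that makes $\d(-x^{-1})$ algebraic and the resulting $1$-form exact, so this works out. The other novelty relative to Proposition \ref{svdp} is that $\H^1(\C^*\times\C,\C)\ne 0$, so the correspondence between volume preserving fields and functions modulo constants is not a bijection and Theorem \ref{criterion}(2) does not apply verbatim; this is absorbed by the single complete volume preserving field $\sigma=-w^{-1}\partial_v$ representing the generator of $\H^1$, or, if one prefers, by invoking Proposition \ref{prop-critw}, whose cohomology hypothesis holds for the same reason. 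The remaining Poisson-bracket computation producing all $\nu_{w^av^b}$ is a one-line calculation and I do not expect it to present any difficulty.
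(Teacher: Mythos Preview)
Your argument is correct for the statement as written. Note, however, that the bound $\deg(p(0,z))\le 0$ in the stated theorem is evidently a typo for $\le 2$: the paper's own proof opens with the cases $\deg(p(0,z))\in\{1,2\}$, which it dispatches via Proposition~\ref{standard} and Proposition~\ref{svdp} (together with the known result for $\C^2$), before turning to $\deg(p(0,z))=0$. So your proof addresses only the last of the three cases the paper actually intends.

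For that case, both you and the paper first reduce to $\C^*\times\C$ via Proposition~\ref{standard}(b)(i), but then proceed differently. The paper does not track the specific form $\d x/x^2\wedge\d z$; instead it shows that \emph{every} algebraic volume form on $\C^*\times\C$ can be pulled back to $\eta_0=\d u/u\wedge\d v$ by an algebraic automorphism, and then invokes Theorem~1 of \cite{kaku-volume2} applied to the semi-compatible pair $(u\,\partial/\partial u,\ \partial/\partial v)$. You instead observe that the particular form at hand becomes the flat form $\d w\wedge\d v$ in the coordinates $(w,v)=(-x^{-1},z)$, and then establish the algebraic volume density property for $(\C^*\times\C,\d w\wedge\d v)$ by hand: the Poisson brackets $\{w^m,(wv)^\ell\}=m\ell\,w^{m+\ell-1}v^{\ell-1}$ generate all Hamiltonian fields $\nu_{w^av^b}$, and the single complete field $\sigma=-w^{-1}\partial_v$ takes care of the nontrivial class in $\H^1(\C^*\times\C,\C)$. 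Your route is self-contained and makes the $\H^1$ obstruction explicit; the paper's route is shorter and yields the slightly stronger conclusion that $\C^*\times\C$ has the algebraic volume density property for \emph{every} algebraic volume form, at the cost of an external reference.
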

\begin{proof}
 If $\deg(p(0,z))\in\lbrace 1,2 \rbrace$ then by Proposition \ref{standard} the surface $X$ is algebraically isomorphic to some surface $X_{a,b}$ or to $\C^2$.
Since on these surfaces there are no non-constant invertible regular functions two different algebraic volume forms differ only by multiplication with a
constant. So the isomorphism induces a natural bijection between algebraic volume preserving vector fields on $X$ and $X_{a,b}$ (resp. $\C^2$). Thus algebraic
volume density property is preserved under algebraic isomorphisms. Therefore Proposition \ref{svdp} and the well known fact that $\C^2$ has the algebraic volume
density property concludes this case.

If $\deg(p(0,z))=0$ then by Proposition \ref{standard} the surface $X$ is algebraically isomorphic to $\C^*\times \C$. For any algebraic volume form $\eta$ on
$\C^*\times \C$ there is an algebraic automorphism such that the pull back of $\eta$ is equal to $\eta_0 = \d u/u\wedge \d v$. Indeed for an arbitrary $\eta =
au^k \d u \wedge \d v$ with $a\in\C^*$ and $k\in\mathbb{Z}$ the pull back of $\eta$ by $(u,v)\mapsto (u,a^{-1}u^{-k-1}v)$ is $\eta_0$. Apply Theorem 1 of
\cite{kaku-volume2} to the semi-compatible pair $(u\cdot\partial/\partial u,\partial/\partial v)$ to see that  $\C^*\times\C$ with $\eta_0$ has the algebraic
volume density property and thus $\C^*\times \C$ has the algebraic volume density
property for all algebraic volume forms.
\end{proof}

\subsection{The density property}
We will show the density property for the surfaces $X_{1,b}=\lbrace x^2y = z^2-b+x\rbrace$ with $b\in\C$. We will use the following vector fields on $X_{1,b}$:
\[
  v_x=2z\frac{\partial}{\partial y} + x^2\frac{\partial}{\partial z}, \quad  v_y=2z\frac{\partial}{\partial x} + (2xy-1)\frac{\partial}{\partial z}, \]\[ 
v_z=z^2x\frac{\partial}{\partial x} - \left(2(z^2-b)y -axy + a^2\right)\frac{\partial}{\partial y}.
\]
\begin{definition}
 For an algebraic vector field $\nu$ the $\omega$-divergence $\div \nu$ is the regular function given by $\d i_\nu\omega = (\div\nu)\cdot \omega$.
\end{definition}

\begin{lemma}\label{div'}
 We have $x^kv_x,y^kv_y\in\cavfw(X_{1,b})$ and $zx^kv_x,z^kv_z\in\cavf(X_{1,b})$ for $k\geq0$. Moreover $\div zx^kv_x = x^{k+2}$ and $\div z^kv_z = - z^{k+2}$.
\end{lemma}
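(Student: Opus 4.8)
The plan is to verify the four assertions more or less directly, reusing Lemma \ref{complete} for the volume-preserving part and the completeness argument of Lemma \ref{lem-complete} for $v_z$. First I would observe that $x^k v_x, y^k v_y \in \cavfw(X_{1,b})$ is precisely Lemma \ref{complete} (applied with $a=1$), so nothing new is needed there. For $z x^k v_x$ and $z^k v_z$, completeness follows by the same reasoning as in Lemma \ref{lem-complete}: $v_x$ is locally nilpotent and $z$ lies in $\ker v_x$ (since $v_x(z) = x^2$, wait — actually $v_x(z)=x^2\neq 0$, so $z\notin\ker v_x$; here one instead notes that $z x^k v_x$ is still locally nilpotent because applying it twice already kills the $z$-coefficient after one step — more carefully, $z x^k v_x$ has polynomial coefficients and a short check shows its flow is polynomial in time, hence complete). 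For $v_z$ one repeats the argument from Lemma \ref{lem-complete}: the differential equation for $x$ is linear and uncoupled (here $\dot x = z^2 x$ but $z$ is not constant along the flow; however $v_z$ kills $z$ in the sense that $v_z(z)$ can be computed and one checks the $x$-equation decouples after noting $v_z$ fixes $z^2 - b + \dots$), then the $y$-equation becomes linear, giving a global solution; multiplying by $z^k\in\O$ does not affect completeness by the same locally-nilpotent-type argument.

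The substantive content is the two divergence computations $\div(z x^k v_x) = x^{k+2}$ and $\div(z^k v_z) = -z^{k+2}$, and this is the step I would carry out explicitly. For the first, I would compute $i_{z x^k v_x}\omega = z x^k\, i_{v_x}\omega$ and use the calculation from the proof of Lemma \ref{lem-complete} (or Lemma \ref{correspondance}), which gives $i_{v_x}\omega = -\d x$ up to sign; hence $i_{z x^k v_x}\omega = -z x^k\,\d x$, and then $\d(-z x^k\,\d x) = -x^k\,\d z\wedge\d x$, which I must rewrite as a multiple of $\omega = \d x/x^2\wedge\d z$. Since $\d z\wedge \d x = x^2\,\omega$ (up to the sign convention), this yields $\div(z x^k v_x) = x^{k+2}$ after tracking signs. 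For the second, I would compute $i_{v_z}\omega$ directly from the coefficients of $v_z$ against $\omega = \d x/x^2\wedge\d z$ (note $v_z$ has no $\partial/\partial z$-component, so only the $\partial/\partial x$-part contributes, giving $i_{v_z}\omega = z^2 x \cdot \frac{1}{x^2}\,\d z = \frac{z^2}{x}\,\d z$), then $i_{z^k v_z}\omega = z^{k+2}/x\,\d z$ and $\d(z^{k+2}x^{-1}\,\d z) = -z^{k+2}x^{-2}\,\d x\wedge\d z = -z^{k+2}\,\omega$, giving $\div(z^k v_z) = -z^{k+2}$.

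The main obstacle I anticipate is purely bookkeeping: getting the signs and the orientation convention for $i_\nu\omega$ and for $\d z\wedge\d x$ versus $\d x\wedge\d z$ consistent with the formula $\d i_\nu\omega = (\div\nu)\omega$ as defined just above the lemma, and making sure the ``$y$-component'' of $v_x$ (which is $2z\,\partial/\partial y$, and $\d y$ does appear when one expands $\omega$ after substituting the defining relation) is correctly accounted for — in fact on $X_{1,b}$ one has $\d y = \d\big((z^2-b+x)/x^2\big)$, so the $\partial/\partial y$-terms do feed into the computation and must be handled via this identity, exactly as in the chain of equalities in the proof of Lemma \ref{lem-complete}. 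Once the convention is fixed, each computation is a one-line exterior-algebra manipulation. The completeness claims for $z x^k v_x$ and $z^k v_z$ require only the remark that a vector field with polynomial coefficients whose flow solves a triangular system of linear ODEs (after the substitutions above) is complete, which is already the template used in Lemma \ref{lem-complete}.
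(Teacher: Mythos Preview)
Your approach matches the paper's: invoke Lemma~\ref{complete} (equivalently Lemma~\ref{lem-complete}) for the completeness statements, then compute the two divergences by writing $i_{zx^kv_x}\omega=-zx^k\,\d x$ and $i_{z^kv_z}\omega=\dfrac{z^{k+2}}{x}\,\d z$ and applying~$\d$. The paper's proof is literally these two one-line computations.

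Two small corrections to your commentary, neither of which affects the outcome. First, $zx^kv_x$ is \emph{not} locally nilpotent and its flow is \emph{not} polynomial in time: since $v_x(x)=0$ one has $\dot z = zx_0^{k+2}$, giving $z(t)=z_0e^{x_0^{k+2}t}$, and then $\dot y = 2z^2x_0^k$ integrates to an entire function of~$t$. So completeness holds for the reason you state at the end (triangular system with globally defined solutions), not for the reason you state first. Second, for $v_z$ there is no subtlety about ``$z$ not being constant along the flow'': $v_z$ has no $\partial/\partial z$-component, so $z$ \emph{is} constant along its flow, and hence $z^kv_z$ is complete exactly by the argument in Lemma~\ref{lem-complete}. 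Finally, your worry about the $\partial/\partial y$-component is unfounded: since $\omega=\d x/x^2\wedge\d z$, the interior product $i_\nu\omega$ depends only on $\nu(x)$ and $\nu(z)$, so the $\partial/\partial y$-part contributes nothing and no substitution via the defining relation is needed.
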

\begin{proof}
 Lemma \ref{complete} says $x^kv_x,y^kv_y\in\cavfw(X_0)$ and $zx^kv_x\in\cavf(X_{1,b})$. For the statement about the divergence we make the calculations $$\d
i_{zx^kv_x}\omega = -\d zx^k\d x =  x^k\d x\wedge\d
z = x^{k+2}\omega,$$  $$\d i_{z^kv_z}\omega = \d \frac{z^{k+2}}{x}\d z = \frac{-z^{k+2}}{x^2}\d x \wedge \d z = -z^{k+2}\omega,$$ and thus prove the statement.
\end{proof}

The following lemma is well known.
\begin{lemma}\label{div}
For $v_1,v_2\in\avf(X_{1,b})$ we have
\[
 \div [v_1,v_2] =v_1(\div v_2) - v_2(\div v_1),
\]
in particular we have $$\div [x^kv_x,v_2] = x^kv_x(\div v_2) \quad \mathrm{and} \quad \div [y^kv_y,v_2] = y^kv_y(\div v_2)$$ for any $k\geq0$. Moreover for
$f\in\C[X_{1,b}]$:
\[
\div fv = f\div v + v(f).
\]
\end{lemma}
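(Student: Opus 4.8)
The plan is to deduce all three identities from Cartan's magic formula $L_\nu = \d i_\nu + i_\nu\d$ together with the single structural observation that $\omega$ is a nowhere vanishing form of top degree on the surface $X_{1,b}$ (so $\dim X_{1,b}=2$). Consequently every $3$-form on $X_{1,b}$ vanishes; in particular $\d\omega=0$, so $L_\nu\omega = \d i_\nu\omega = (\div\nu)\,\omega$ for every algebraic vector field $\nu$, which is exactly the defining relation of $\div$.

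First I would prove the commutator identity. I invoke the standard fact that the Lie derivative furnishes a Lie algebra representation on differential forms, $[L_{v_1},L_{v_2}] = L_{[v_1,v_2]}$, and apply it to $\omega$:
\[ L_{[v_1,v_2]}\omega \;=\; L_{v_1}\big((\div v_2)\,\omega\big) - L_{v_2}\big((\div v_1)\,\omega\big). \]
Since each $L_{v_i}$ is a derivation of the exterior algebra with $L_{v_i}g = v_i(g)$ on functions, I expand $L_{v_1}\big((\div v_2)\,\omega\big) = v_1(\div v_2)\,\omega + (\div v_2)\,L_{v_1}\omega = \big(v_1(\div v_2) + (\div v_1)(\div v_2)\big)\,\omega$, and symmetrically for the other term. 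Subtracting, the products $(\div v_1)(\div v_2)$ cancel and I am left with $L_{[v_1,v_2]}\omega = \big(v_1(\div v_2) - v_2(\div v_1)\big)\,\omega$, i.e. $\div[v_1,v_2] = v_1(\div v_2) - v_2(\div v_1)$. The two ``in particular'' statements then follow at once: by Lemma \ref{div'} the fields $x^k v_x$ and $y^k v_y$ lie in $\cavfw(X_{1,b})$, so $\div(x^k v_x) = \div(y^k v_y) = 0$ and the corresponding second summand drops out.

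For the Leibniz-type identity I would compute $\d i_{fv}\omega$ directly. From $i_{fv}\omega = f\,i_v\omega$ and the product rule for $\d$,
\[ \d i_{fv}\omega \;=\; \d f\wedge i_v\omega + f\,\d i_v\omega \;=\; \d f\wedge i_v\omega + f(\div v)\,\omega. \]
It remains to identify $\d f\wedge i_v\omega$ with $v(f)\,\omega$. For this I apply $i_v$ to the identically vanishing $3$-form $\d f\wedge\omega$ and use that $i_v$ is an antiderivation: $0 = i_v(\d f\wedge\omega) = (i_v\d f)\,\omega - \d f\wedge i_v\omega = v(f)\,\omega - \d f\wedge i_v\omega$, whence $\d f\wedge i_v\omega = v(f)\,\omega$ and therefore $\div(fv) = f\div v + v(f)$.

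I do not anticipate a genuine obstacle: as the paper notes, the lemma is well known. The only places calling for a little care are the degree bookkeeping ($\d\omega=0$ and $\d f\wedge\omega=0$ because $X_{1,b}$ is a surface, so there are no nonzero $3$-forms) and the sign conventions in the antiderivation property of $i_v$ and in the bracket identity $[L_{v_1},L_{v_2}]=L_{[v_1,v_2]}$; both are entirely standard.
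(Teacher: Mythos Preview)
Your proof is correct and complete. The paper itself omits the proof entirely, simply stating that the lemma is well known; your argument via Cartan's formula, the identity $[L_{v_1},L_{v_2}]=L_{[v_1,v_2]}$, and the antiderivation trick on the vanishing $3$-form $\d f\wedge\omega$ is a standard way to establish these facts, so there is nothing to compare.
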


\begin{lemma}\label{lem-v_ysubmod}
 Let $f\in\C[X_{1,b}]$. Then $fv_y\in\lie(\cavf(X_{1,b}))$.
\end{lemma}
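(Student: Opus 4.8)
The plan is to avoid brute-force bracket manipulations and instead lean on the algebraic volume density property of $X_{1,b}$, which is exactly Proposition \ref{svdp}. By Definition \ref{def-adp} and Proposition \ref{svdp} we have $\avfw(X_{1,b})=\lie(\cavfw(X_{1,b}))\subseteq\lie(\cavf(X_{1,b}))$, so every $\omega$-divergence-free algebraic vector field already lies in $\lie(\cavf(X_{1,b}))$. Since $v_y\in\cavfw(X_{1,b})$ (Lemma \ref{div'}), Lemma \ref{div} gives $\div(fv_y)=f\,\div v_y+v_y(f)=v_y(f)$. Hence, if for a given $f$ we can exhibit some $W\in\lie(\cavf(X_{1,b}))$ with $\div W=v_y(f)$, then $fv_y-W$ is $\omega$-divergence-free, so lies in $\lie(\cavf(X_{1,b}))$, and therefore so does $fv_y$. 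Thus everything reduces to the inclusion of $\C$-vector spaces $v_y(\C[X_{1,b}])\subseteq\div(\lie(\cavf(X_{1,b})))$, and for this it suffices to treat a spanning set of $v_y(\C[X_{1,b}])$.

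By Remark \ref{rem-standard}, $\C[X_{1,b}]$ is spanned by the monomials $x^iy^j$ and $zx^iy^j$ ($i,j\geq0$). From (\ref{2}) one gets $v_y(x^{i+1}y^j)=y^j v_y(x^{i+1})=2(i+1)\,zx^iy^j$ and $v_y(y^j)=0$, whereas $v_y(zx^iy^j)$ becomes a polynomial in $x,y$ only once one substitutes $z^2=x^2y+b-x$. So $v_y(\C[X_{1,b}])$ is spanned by the monomials $zx^iy^j$ together with the elements $v_y(zx^iy^j)$, $i,j\geq0$, and I would place these into $\div(\lie(\cavf(X_{1,b})))$ in two steps.

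First I would handle the monomials $zx^iy^j$, using only the divergence identities of Lemmas \ref{div'} and \ref{div}. For $i\geq1$, since $\div(zx^{i-1}v_x)=x^{i+1}$ and $\div(y^jv_y)=0$,
\[ \div[\,zx^{i-1}v_x,\ y^jv_y\,]=-\,y^j v_y(\div(zx^{i-1}v_x))=-\,y^j v_y(x^{i+1})=-2(i+1)\,zx^iy^j, \]
and the bracket lies in $\lie(\cavf(X_{1,b}))$ because $zx^{i-1}v_x,y^jv_y\in\cavf(X_{1,b})$; hence $zx^iy^j\in\div(\lie(\cavf(X_{1,b})))$ for $i\geq1$. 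For $i=0$, since $\div v_z=-z^2$,
\[ \div[\,v_z,\ y^jv_y\,]=-\,y^j v_y(\div v_z)=y^j v_y(z^2)=4\,zxy^{j+1}-2\,zy^j, \]
so, as $zxy^{j+1}$ was just covered, also $zy^j\in\div(\lie(\cavf(X_{1,b})))$. For the second step, given $i,j$ I would pick $W\in\lie(\cavf(X_{1,b}))$ with $\div W=zx^iy^j$ (just produced) and use Lemma \ref{div} to get $\div[v_y,W]=v_y(\div W)-W(\div v_y)=v_y(zx^iy^j)$, with $[v_y,W]\in\lie(\cavf(X_{1,b}))$ since $v_y\in\cavf(X_{1,b})$. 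This exhausts the spanning set, giving $v_y(\C[X_{1,b}])\subseteq\div(\lie(\cavf(X_{1,b})))$ and hence the lemma.

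The one genuinely delicate point is the opening reduction: recognizing that, thanks to Proposition \ref{svdp}, the question ``is $fv_y\in\lie(\cavf(X_{1,b}))$?'' collapses to the much softer question ``is $v_y(f)$ the $\omega$-divergence of some element of $\lie(\cavf(X_{1,b}))$?''. Once that is seen, the remainder is a short induction driven solely by $\div(zx^kv_x)=x^{k+2}$, $\div(z^kv_z)=-z^{k+2}$ and $\div[v_1,v_2]=v_1(\div v_2)-v_2(\div v_1)$. A direct attack without Proposition \ref{svdp}---bracketing the complete fields against $\C[X_{1,b}]v_y$ and tracking the spurious multiples of $v_x$ and of $\tfrac12[v_x,v_y]=x^2\partial_x+(1-2xy)\partial_y$ that appear---would be considerably messier.
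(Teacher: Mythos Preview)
Your proof is correct and follows essentially the same strategy as the paper's: reduce via Proposition~\ref{svdp} to showing $v_y(f)\in E:=\div(\lie(\cavf(X_{1,b})))$, then populate $E$ using the divergence identities of Lemmas~\ref{div'} and~\ref{div}. The only cosmetic difference is in the base case: the paper reaches $zy^j\in E$ by first extracting $x-2b\in E$ from $v_y(xz)=4z^2-2b+x$ and then applying $y^jv_y$, whereas you get there directly via $\div[v_z,y^jv_y]$---the substance is identical.
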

\begin{proof}
Let $E\subset\C[X_{1,b}]$ be given by $E=\lbrace \div\nu: \nu\in\lie(\cavf(X_{1,b}))\rbrace$. It is enough to show that for every $f\in\C[X_{1,b}]$ we have
$\div(fv_y)\in E$. Indeed, then $fv_y - \nu \in\avfw(X_{1,b})$ for some $\nu\in\lie(\cavf(X_{1,b}))$. Thus by Proposition \ref{svdp} we have
$fv_y\in\lie(\cavf(X_{1,b}))$. 

By Lemma \ref{div'} we have $x^{i+2}\in E$ and $z^{i+2}\in E$ for all $i\geq 0$. Thus by Lemma \ref{div} we get $v_y(x^{i+2})=2(i+2)x^{i+1}z\in E$, and
therefore $v_y(xz)=2z^2 + 2x^2y -x = 4z^2 - 2b + x \in E$. Since $z^2\in E$ we get $x-2b\in E$, and thus $v_y(x-2b)=2z\in E$. Altogether we have $x^iz\in E$,
$x^{i+2}\in E$ and $x-2b\in E$ for all $i\geq 0$.

Let $f=x^iy^jz^k$ for $i,j\geq 0$ and $k\in\lbrace 0,1\rbrace$. If $f\neq xy^j$ then we have
$$ \div (fv_y) = y^jv_y(x^iz^k) \in E$$
by Lemma \ref{div} because $x^iz^k\in E$ by the above. If $f=xy^j$ then
$$ \div (fv_y) = y^jv_y(x) = y^jv_y(x-2b) \in E$$
by the same arguments. Thus the lemma is proven since any regular function is a sum of such monomials by Remark \ref{rem-standard}.
\end{proof}

\begin{proposition}\label{prop-surfdp}
 The surface $X_{1,b}$ has the density property.
\end{proposition}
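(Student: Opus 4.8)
The plan is to deduce this from Proposition~\ref{prop-crit}, applied to $X=X_{1,b}$ with the single complete vector field $v_y$. Proposition~\ref{prop-crit} has three hypotheses to verify: (i) $\aut(X_{1,b})$ acts transitively on $X_{1,b}$; (ii) the $\O_X$-submodule generated by $v_y$ is contained in the closure of $\lie(\chvf(X_{1,b}))$; and (iii) there is a point $p\in X_{1,b}$ at which $v_y[p]$ is a generating set for $\T_pX$.

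Hypothesis (i) is essentially free: since the coefficient $a=1$ is nonzero, the surface $X_{1,b}$ is smooth, so conditions (A) and (B) from the introduction hold automatically (as noted at the start of this section), and Proposition~\ref{prop-trans} then gives that $\autw(X_{1,b})$ --- hence a fortiori $\aut(X_{1,b})$ --- acts transitively on all of $X_{1,b}$. For hypothesis (ii), Lemma~\ref{lem-v_ysubmod} already states that $f v_y\in\lie(\cavf(X_{1,b}))\subseteq\lie(\chvf(X_{1,b}))$ for every regular function $f$ (the inclusion holding because complete algebraic fields are complete holomorphic fields). Since $X_{1,b}$ is an affine variety, its regular functions are dense in its holomorphic functions in the compact-open topology --- lift a holomorphic function on $X_{1,b}\subset\C^3$ to $\C^3$ by Cartan's Theorem~B and approximate it uniformly on compacts by Taylor polynomials --- so $g v_y$ lies in $\overline{\lie(\chvf(X_{1,b}))}$ for every $g\in\O_X(X_{1,b})$, which is precisely hypothesis (ii).

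For hypothesis (iii), I would choose $p=(x_0,y_0,z_0)\in X_{1,b}$ with $x_0\neq 0$ and $z_0\neq 0$. At such a point $v_x[p]$ and $v_y[p]$ are linearly independent in $\T_pX$ --- the former has vanishing $\partial/\partial x$-component and nonzero $\partial/\partial y$-component, the latter the other way round --- so they already form a basis of $\T_pX$. Because $v_x(x)=0$, the vector field $(x-x_0)v_x$ is a $\ker v_x$-multiple of the complete field $v_x$, hence complete, and it vanishes at $p$; thus its time-one flow lies in $\aut(X)_p$ and acts on $\T_pX$ by $v\mapsto v+v(x-x_0)\,v_x[p]$, exactly as in the proof of Lemma~\ref{lem-genset}. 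Applying this to $v_y[p]$ produces $v_y[p]+2z_0\,v_x[p]$, so the $\aut(X)_p$-orbit of $\{v_y[p]\}$ contains the basis $\{v_y[p],\,v_y[p]+2z_0 v_x[p]\}$ of $\T_pX$; that is, $v_y[p]$ is a generating set.

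With (i)--(iii) in hand, Proposition~\ref{prop-crit} yields that $X_{1,b}$ has the density property. I do not expect a genuine obstacle in this proposition: the real work, namely placing the whole module $\C[X_{1,b}]\cdot v_y$ inside $\lie(\cavf(X_{1,b}))$, has been carried out in Lemma~\ref{lem-v_ysubmod}, and transitivity is inherited from Section~\ref{sec-trans}. The only point needing a little care is the standard passage from the algebraic module to its compact-open closure, which is required to feed the data into the holomorphic criterion rather than an algebraic one.
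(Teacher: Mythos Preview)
Your proof is correct and follows essentially the same route as the paper: invoke transitivity from Section~\ref{sec-trans}, use Lemma~\ref{lem-v_ysubmod} to put the $\C[X_{1,b}]$-module generated by $v_y$ inside $\lie(\cavf(X_{1,b}))$, pass to the holomorphic closure, and feed this together with the generating-set property of $v_y[p]$ into Proposition~\ref{prop-crit}. The only cosmetic difference is that you re-derive the generating-set statement for $v_y[p]$ by hand, whereas the paper simply cites Lemma~\ref{lem-genset} (whose $n=0$ case is exactly your computation), and you spell out the density-of-polynomials step that the paper leaves implicit.
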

\begin{proof}
 By Lemma \ref{lem-genset} the tangent vectors of $v_y$ are a generating set for the tangent space $\T_qX_{1,b}$ at a generic point $q\in X_{1,b}$. By Lemma
\ref{lem-v_ysubmod} the $\C[X_{1,b}]$- submodule generated by $v_y$ is contained in $\lie(\cavf(X_{1,b}))$. Thus the $\O_{X_{1,b}}(X_{1,b})$- submodule
generated by $v_y$ is contained in the closure of $\lie(\cavf(X_{1,b}))$. Therefore by Lemma \ref{lem-genset} and Proposition \ref{prop-crit} the surface
$X_{1,b}$ has the density
property.
\end{proof}

\begin{theorem}
 Let $X=\lbrace x^2y=p(x,z)\rbrace$ with $p\in\C[x,z]$ and $\deg(p(0,z))\leq 2$ be a smooth surface. Then $X$ has the density property.
\end{theorem}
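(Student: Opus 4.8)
The plan is to reduce the statement to the classification of such surfaces provided by Proposition \ref{standard}(b), and then to verify the density property on each of the finitely many model surfaces that occur, using that the density property is a biholomorphic invariant.

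First I would invoke Proposition \ref{standard}(b). Since $\deg_z p(0,z)\le 2$, one of the following holds: $X\algiso\C^*\times\C$, or $X\algiso\C^2$, or $X\algiso X_{1,0}$, or $X\algiso X_{a,1}$ for a (unique) $a\in\lbrace 0,1\rbrace$. An algebraic isomorphism of affine varieties is in particular a biholomorphism, and any biholomorphism $\Phi\colon X\to Y$ induces a topological Lie algebra isomorphism $\hvf(X)\to\hvf(Y)$ that carries complete vector fields to complete vector fields; hence $X$ has the density property if and only if $Y$ does. So it suffices to establish the density property for the five manifolds $\C^*\times\C$, $\C^2$, $X_{1,0}$, $X_{0,1}$ and $X_{1,1}$.

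Next I would treat these cases one by one. That $\C^2$ and $\C^*\times\C$ have the density property is well known (the former going back to Anders\'en--Lempert). The surfaces $X_{1,0}$ and $X_{1,1}$ are the instances $b=0$ and $b=1$ of $X_{1,b}$ and hence have the density property by Proposition \ref{prop-surfdp}. The only surface not covered directly by Proposition \ref{prop-surfdp} is $X_{0,1}$, and here lies the one (minor) subtlety of the argument: by the uniqueness in Proposition \ref{standard}(b)(iv), $X_{0,1}$ is \emph{not} algebraically isomorphic to $X_{1,1}$, so Proposition \ref{prop-surfdp} cannot be transported to it by an algebraic isomorphism. However, Proposition \ref{standard}(a)(ii) supplies an explicit \emph{biholomorphism} $X_{0,1}\holiso X_{1,1}$, and since $X_{1,1}$ has the density property and that property is biholomorphically invariant, $X_{0,1}$ has it as well. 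Collecting the five cases finishes the proof; the essential ingredient beyond Propositions \ref{standard} and \ref{prop-surfdp} is simply the freedom to pass to a biholomorphic, rather than algebraically isomorphic, model.
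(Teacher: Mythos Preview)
Your proof is correct and follows essentially the same route as the paper: reduce via Proposition~\ref{standard} to a finite list of model surfaces, invoke the well-known density property for $\C^2$ and $\C^*\times\C$, and use Proposition~\ref{prop-surfdp} for the remaining $X_{1,b}$. The only difference is cosmetic: the paper combines parts (a) and (b) of Proposition~\ref{standard} up front to list just four biholomorphism classes ($\C^*\times\C$, $\C^2$, $X_{1,0}$, $X_{1,1}$), whereas you first list five algebraic isomorphism classes and then explicitly collapse $X_{0,1}$ into $X_{1,1}$ via the biholomorphism of Proposition~\ref{standard}(a)(ii).
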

\begin{proof}
 By Proposition \ref{standard} the surface $X$ is biholomorphic to $\C^*\times \C$, $\C^2$, $X_{1,0}$ or $X_{1,1}$. It is well known that the first two have
the density property. The two other surfaces have the density property by Proposition \ref{prop-surfdp}.
\end{proof}

\end{document}